\documentclass[12pt]{article}
\usepackage{amssymb,amsmath,amsthm}
\usepackage[usenames]{color}

\newtheorem{thm}{Theorem}[section]
\newtheorem{proposition}[thm]{Proposition}
\newtheorem{lemma}[thm]{Lemma}

\newtheorem{corollary}[thm]{Corollary}
\newtheorem{remark}[thm]{Remark}

\DeclareMathOperator{\Spec}{Spec}
\DeclareMathOperator{\SL}{SL}
\DeclareMathOperator{\PSL}{PSL}

\DeclareMathOperator{\GL}{GL}
\DeclareMathOperator{\Z}{\mathbb{Z}}
\DeclareMathOperator{\Id}{Id}
\DeclareMathOperator{\trace}{trace}

\title{Word values in $p$-adic and adelic groups}

\author{Nir Avni, Tsachik Gelander, Martin Kassabov, Aner Shalev}

\begin{document}

\maketitle

\section{Introduction}

In recent years there has been wide interest in word maps on groups,
with various connections and applications to other areas; see
for instance~\cite{Bo}, \cite{LiSh}, \cite{La}, \cite{Ja}, \cite{Sh1}, 
\cite{Sh2},
\cite{Sh3}, \cite{LSh1}, \cite{LSh2}, \cite{LSh3}, \cite{GSh}, \cite{LBST},
\cite{NP}, \cite{Se2}, \cite{ScSh}, \cite{LST1}, \cite{LST2}, as well as Segal's
book~\cite{Se}
and the references therein.

Recall that a word $w=w(x_1, \ldots , x_d)$ is an element of a free group
$F_d$ on the free generators $x_1, \ldots, x_d$.
Given such a word $w$ and a group $G$ we define
a word map from $G^d$ to $G$ induced by substitution. The image of this
map is denoted by $w(G)$.

The study of this image attracted considerable attention.
By Borel~\cite{Bo} (see also \cite{La}) word maps corresponding to
a non-trivial word $w \ne 1$ are dominant on simple algebraic groups.
Word maps on finite simple groups were subsequently studied extensively.
A major result from~\cite{LST1} shows that, given a word $w \ne 1$, and
a sufficiently large finite simple group $G$, every element of $G$ is a product
of two values of $w$, namely $w(G)^2 = G$.

For some special words even more has been shown. If $w = [x_1,x_2]$, the
commutator word, then~\cite{LBST} shows that $w(G) = G$ for all
finite simple groups, proving a longstanding conjecture of Ore~\cite{O}
(see also~\cite{EG} and the references therein). 

In this paper we study similar problems for certain infinite groups,
namely $p$-adic groups, and, more generally, simple algebraic groups over
local rings. This involves new challenges and new methods.
In~\cite{Sh3} it is conjectured that
every element of $\SL_n(\Z_p)$ ($p$ prime, $n \ge 2$) is a commutator
(assuming $p>3$ if $n = 2$), and some preliminary results were obtained.
In Theorem~\ref{thm:commutatorPSL} below we prove this conjecture for 
$\PSL_n(\mathbb{Z}_p)$ assuming that $n$ is a proper divisor of $p-1$. 
Moreover, considering the very general setup $G=\SL_n(O)$ where $O$ is a local ring whose residue field $O/m$ has more than $n+1$ elements, we show that every matrix which is not congruent to a scalar modulo $m$ can be expressed as a commutator (see Theorem \ref{thm:commutator}).

We also study arbitrary word maps in $p$-adic groups. 
A result of Jaikin-Zapirain \cite{Ja} shows that every word
has finite width in a compact finitely generated $p$-adic analytic
group. This means that every element of the verbal subgroup generated
by the image of the word map is a bounded product of word values
and their inverses. Finding explicit bounds on the word
width is in general a very challenging problem.

Consider the $p$-adic group $G(\Z_p)$, where $G$ is a 
semisimple, simply connected, algebraic group over $\mathbb{Q}$.
For any fixed word $w \ne 1$ and large $p$ we show 
that $w(G(\Z_p))^3 = G(\Z_p)$, so in particular the word width
is at most 3 (see Theorem~\ref{prop:3.words} below,
which is somewhat stronger). It turns out that this result cannot
be improved to $w(G(\Z_p))^2 = G(\Z_p)$, since by~\cite{LST2} finite
quasisimple groups $H$ need not satisfy $w(H)^2 = H$ (when $w \ne 1$ is
fixed and $H$ is as large as we like).
However, we show in Theorem~\ref{prop:2.words}
below that every element of $G(\Z_p)$ whose image modulo the
first congruence subgroup $G^1(\Z_p)$ is not central is a product
of two values of $w$.
We also prove some results on word maps on adelic groups.

\noindent
{\bf Acknowledgement.} We thank Michael Larsen for useful comments. 
Avni was supported by NSF Grant DMS-0901638,
Gelander was supported by ERC grant 203418 and ISF grant 1003/11,
Kassabov was supported by NSF grant DMS-0900932, and Shalev was
supported by advanced ERC grant 247034.

\section{General Words}

In this section, we lift some results about the possible values of word maps 
from algebraic groups over finite fields to algebraic groups over local rings 
of characteristic 0. To simplify notation, assume that $O=\mathbb{Z} _p$. 
If $G$ is an algebraic group over $\mathbb{Z} _p$, we denote the kernel of 
$G(\mathbb{Z} _p) \rightarrow G(\mathbb{F} _p)$ by $G^1(\mathbb{Z} _p)$. 
For any element $w(x_1,\ldots,x_d)=
x_{i_1}^{m_1}x_{i_2}^{m_2}\cdots x_{i_k}^{m_k}\in F_d$, we define a map, 
which we also denote by $w$, from $G^d$ to $G$ by 
$w(g_1,\ldots,g_d)=g_{i_1}^{m_1}g_{i_2}^{m_2}\cdots g_{i_k}^{m_k}$.
Since $G(\mathbb{Z} _p)$ and $G^1(\mathbb{Z} _p)$ are 
$p$-adic analytic and word maps $w$ are analytic, the derivative (or differential) $dw$ is well defined (in terms of local coordinates it is given by the Jacobian matrix).

\begin{lemma} Let $G$ be a semisimple algebraic group 
over $\mathbb{Q}$.
For every non-trivial word $w$, $w(G(\mathbb{Z}_p))$ contains an open subset of $G(\mathbb{Z}_p)$.
\end{lemma}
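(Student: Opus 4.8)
The plan is to use the fact that a non-trivial word map on a semisimple algebraic group is dominant (Borel's theorem, cited in the introduction), and then transfer this to the $p$-adic points using a submersion/inverse function theorem argument. Here is the outline.

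First, consider the word map $w \colon \mathbf{G}^d \to \mathbf{G}$ as a morphism of algebraic varieties over $\mathbb{Q}$ (or over $\overline{\mathbb{Q}}$), where $\mathbf{G}$ is the semisimple group whose $\mathbb{Z}_p$-points we are looking at. By Borel's theorem, since $w \neq 1$ and $\mathbf{G}$ is semisimple, the word map is dominant; in particular its image contains a Zariski-dense open subset of $\mathbf{G}$, and the generic fiber has dimension $(d-1)\dim \mathbf{G}$. Consequently there is a point $(g_1,\ldots,g_d) \in \mathbf{G}(\overline{\mathbb{Q}})^d$ — in fact on a Zariski-dense open set of such points — at which the differential $dw$ is surjective onto the tangent space of $\mathbf{G}$ at $w(g_1,\ldots,g_d)$. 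I would like such a smooth point to be defined over $\mathbb{Q}$, which follows since the locus of smooth points of $w$ is a nonempty Zariski-open subset of $\mathbf{G}^d$ defined over $\mathbb{Q}$, hence has $\mathbb{Q}$-points (using that $\mathbf{G}$ is rational, or at worst unirational, over $\mathbb{Q}$, so $\mathbb{Q}$-points are Zariski-dense in $\mathbf{G}^d$).

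Next I would pass to $\mathbb{Z}_p$-points. Having a $\mathbb{Q}$-point $a = (g_1,\ldots,g_d)$ where $dw_a$ is surjective, I can scale/clear denominators so that $a \in \mathbf{G}(\mathbb{Z}_p)^d$ (the point lies in $\mathbf{G}(\mathbb{Q})$, and for all but finitely many $p$ its coordinates are $p$-integral; but here $p$ is fixed — so instead I should choose the smooth $\mathbb{Q}$-point with an eye on $p$, or, more robustly, note that the $p$-adic points $\mathbf{G}(\mathbb{Z}_p)^d$ are Zariski-dense in $\mathbf{G}^d$ and the smooth locus is Zariski-open and nonempty, so I can pick $a \in \mathbf{G}(\mathbb{Z}_p)^d$ at which $dw_a$ is surjective). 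Since the word map is a $p$-adic analytic map between $p$-adic analytic manifolds and its differential at $a$ is surjective, the $p$-adic implicit/inverse function theorem (submersion theorem) applies: $w$ restricted to a small neighborhood of $a$ is an open map, so $w(\mathbf{G}(\mathbb{Z}_p)^d)$ contains an open neighborhood of $w(a)$ in $\mathbf{G}(\mathbb{Z}_p)$. This gives the desired open subset.

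The main obstacle I anticipate is the descent step: guaranteeing that a point where $dw$ is surjective can be found inside $\mathbf{G}(\mathbb{Z}_p)^d$ (for the fixed $p$ at hand), rather than merely over $\overline{\mathbb{Q}}$. The clean way around this is the density fact: $\mathbf{G}(\mathbb{Z}_p)$ is Zariski-dense in $\mathbf{G}$ (since $\mathbf{G}$ is connected and $\mathbb{Z}_p$ is infinite with fraction field $\mathbb{Q}_p$; density of $\mathbf{G}(\mathbb{Q}_p)$ is standard, and one can shrink into $\mathbf{G}(\mathbb{Z}_p)$ since the latter is open and nonempty in the $p$-adic topology, and a nonempty $p$-adic-open subset of the $\mathbb{Q}_p$-points of a connected group is still Zariski-dense), so any nonempty Zariski-open set — in particular the smooth locus of $w$ — meets $\mathbf{G}(\mathbb{Z}_p)^d$. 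A secondary point to be careful about is that "open subset" here means open in the $p$-adic (analytic) topology on $\mathbf{G}(\mathbb{Z}_p)$, which is exactly what the submersion theorem delivers. I would also remark that this argument simultaneously shows $dw$ is surjective on a $p$-adic-open set of tuples, which will be useful for the sharper theorems quoted later.
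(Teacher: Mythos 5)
Your proposal is correct and follows essentially the same route as the paper: Borel's dominance theorem, the characteristic-zero equivalence of dominance with surjectivity of $dw$ on a nonempty Zariski-open locus, Zariski density of $G(\mathbb{Z}_p)^d$ to pick a $\mathbb{Z}_p$-smooth point, and the $p$-adic submersion/open mapping theorem. The only difference is that you initially detour through $\mathbb{Q}$-points and clearing denominators before (correctly) discarding that in favor of the direct Zariski-density argument, which is precisely the step the paper uses.
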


\begin{proof} A theorem of Borel \cite{Bo} (see also \cite{La}) says that the map 
$w:G^d \rightarrow G$ is dominant. Over an algebraically closed field of 
characteristic 0, like $\overline{\mathbb{Q}_p}$, this is equivalent to 
the existence of a point for which the derivative of $w$ is surjective 
(as a map of $\overline{\mathbb{Q}_p}$ vector spaces). Since the set of 
points in $G^n$ for which $dw$ is surjective is Zariski open and 
$G(\mathbb{Z}_p)$ is Zariski dense, there is a tuple 
$\overrightarrow{g}=(g_1,\ldots,g_d)\in G(\mathbb{Z}_p)^d$ such that 
$d w|_{\overrightarrow{g}}$ is surjective. The lemma now follows from the 
$p$-adic version of the open mapping theorem 
(cf. \cite[Part II, Ch. 3, Sec. 9]{Serre}).
\end{proof}

\begin{remark} This lemma does not hold for local rings of positive characteristic: for example, the image of $\SL_n(\mathbb{F}_q[[t]])$ under the map $x\mapsto x^p$ is contained in the set of all matrices all of whose eigenvalues are $p$-th powers, which is a nowhere dense set.
\end{remark}

\begin{thm} \label{prop:3.words} 
Suppose that $G$ is a 
semisimple, simply connected, algebraic group over $\mathbb{Q}$, 
and that $w_1,w_2,w_3$ are non-trivial words. If $p$ is large enough, then $w_1(G(\mathbb{Z} _p))\cdot w_2(G(\mathbb{Z} _p))\cdot w_3(G(\mathbb{Z} _p))=G(\mathbb{Z} _p)$. In particular  $w(G(\mathbb{Z} _p))^3 = G(\mathbb{Z} _p)$
for $w \ne 1$ and sufficiently large $p$.
\end{thm}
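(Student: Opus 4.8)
The plan is to push the problem down to the finite quotient $\pi\colon G(\mathbb{Z}_p)\twoheadrightarrow G(\mathbb{F}_p)$ and there turn it into a statement about products of \emph{large} subsets. Concretely, I will show that for $p$ large there are subsets $A_1,A_2,A_3\subseteq G(\mathbb{F}_p)$ with $w_i(G(\mathbb{Z}_p))\supseteq\pi^{-1}(A_i)$ for each $i$, each of size $\ge\varepsilon_i\,|G(\mathbb{F}_p)|$ with $\varepsilon_i>0$ independent of $p$, and then that $A_1A_2A_3=G(\mathbb{F}_p)$. Granting this, and using that for a surjective homomorphism one has $\pi^{-1}(A_1)\pi^{-1}(A_2)\pi^{-1}(A_3)=\pi^{-1}(A_1A_2A_3)$ (pick preimages successively and use that $\ker\pi$ is a group), we get $w_1(G(\mathbb{Z}_p))\,w_2(G(\mathbb{Z}_p))\,w_3(G(\mathbb{Z}_p))\supseteq\pi^{-1}(A_1A_2A_3)=\pi^{-1}(G(\mathbb{F}_p))=G(\mathbb{Z}_p)$, and the reverse inclusion is trivial.

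For the lifting step I would sharpen the differential argument behind the lemma above so that it works \emph{modulo $p$}. Borel's theorem gives a non-empty Zariski-open $U_i\subseteq G^d$, defined over $\mathbb{Q}$, on which $dw_i$ is surjective; since $w_i$ is dominant its image is not $\{1\}$, so $\{w_i=1\}$ is a proper closed subvariety and $U_i':=U_i\setminus\{w_i=1\}$ is still a non-empty open defined over $\mathbb{Q}$. Excluding the finitely many primes of bad reduction (for $G$ and for the equations cutting out $U_i'$), the variety $U_i'$ has good reduction, remains geometrically irreducible of dimension $d\cdot\dim G$, and hence by Lang--Weil satisfies $|U_i'(\mathbb{F}_p)|\ge\tfrac12 p^{d\dim G}$ for $p$ large. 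Now for any $\vec g\in G(\mathbb{Z}_p)^d$ whose reduction lies in $U_i'(\mathbb{F}_p)$, the Jacobian matrix of $w_i$ at $\vec g$ is surjective \emph{mod $p$}; by the $p$-adic inverse function theorem (Newton iteration, as in the open mapping theorem quoted above) this forces $w_i$ to map the congruence neighbourhood $\vec g\cdot(G^1(\mathbb{Z}_p))^d$ \emph{onto} the full coset $w_i(\vec g)\,G^1(\mathbb{Z}_p)=\pi^{-1}(\pi(w_i(\vec g)))$. Taking the union over all such $\vec g$ yields $w_i(G(\mathbb{Z}_p))\supseteq\pi^{-1}(A_i)$, where $A_i:=w_i\big(U_i'(\mathbb{F}_p)\big)$. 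Finally $A_i$ is large: on $U_i$ the map $w_i$ is a smooth morphism, so its fibres there are smooth of dimension $(d-1)\dim G$ and contribute $O(p^{(d-1)\dim G})$ points each; dividing this into the Lang--Weil lower bound for $|U_i'(\mathbb{F}_p)|$ gives $|A_i|\ge c_i\,p^{\dim G}\ge\varepsilon_i\,|G(\mathbb{F}_p)|$ as claimed.

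It remains to prove $A_1A_2A_3=G(\mathbb{F}_p)$, and for this I would invoke quasirandomness. Because $G$ is semisimple and \emph{simply connected}, for $p$ large $G(\mathbb{F}_p)$ is perfect and is a central product of quasisimple groups of Lie type, so by the Landazuri--Seitz bounds the minimal dimension $D=D(p)$ of a non-trivial complex irreducible representation satisfies $D\ge c_G\,p^{r_G}$ with $r_G\ge1$; in particular $D\to\infty$. By Gowers' mixing inequality (in the form due to Babai--Nikolov--Pyber), if $X,Y,Z$ are subsets of a finite group $H$ with $|X|\,|Y|\,|Z|>|H|^3/D$, then $XYZ=H$. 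Applying this with $X=A_1,Y=A_2,Z=A_3$: once $p$ is large enough that $D>(\varepsilon_1\varepsilon_2\varepsilon_3)^{-1}$, we have $|A_1||A_2||A_3|\ge(\varepsilon_1\varepsilon_2\varepsilon_3)\,|G(\mathbb{F}_p)|^3>|G(\mathbb{F}_p)|^3/D$, hence $A_1A_2A_3=G(\mathbb{F}_p)$, completing the proof. (One could instead try to route this step through the covering theorem of~\cite{LST1} applied to $G(\mathbb{F}_p)/Z$, but then the bounded centre $Z$ must be dealt with separately, which is less transparent than the size-based argument.) The final ``in particular'' is the case $w_1=w_2=w_3=w$.

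The main obstacle is the lifting step --- upgrading ``$dw_i$ is surjective at some $\overline{\mathbb{Q}_p}$-point'' to ``$dw_i$ is surjective \emph{mod $p$} at a $\mathbb{Z}_p$-point, and therefore $w_i$ is onto from a congruence disc to a full congruence coset of $G^1(\mathbb{Z}_p)$.'' Making this precise is where the ``$p$ sufficiently large'' hypotheses are consumed: good reduction of $G$, integrality of a local coordinate chart near the chosen tuple, and the identification of $G^1(\mathbb{Z}_p)$ with a uniform (torsion-free, saturated) pro-$p$ group, so that the Newton iteration converges exactly onto the coset $w_i(\vec g)\,G^1(\mathbb{Z}_p)$ and not merely onto a deeper congruence subgroup. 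The remaining inputs --- Lang--Weil point counting and the quasirandom product theorem --- I would treat as standard black boxes.
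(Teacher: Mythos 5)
Your proposal is correct, but it takes a genuinely different route from the paper in the final combinatorial step. Both arguments share the same geometric/Henselian core: find $\mathbb{Z}_p$-tuples at which the reduction of $dw_i$ is surjective over $\mathbb{F}_p$ and use the $p$-adic implicit function theorem to conclude that $w_i(G(\mathbb{Z}_p))$ contains a full $G^1(\mathbb{Z}_p)$-coset (the paper even states this via Hensel's lemma in almost the same words). The divergence is in how the problem mod $p$ is closed. The paper applies the Hensel step to $w_1$ only: it selects \emph{one} tuple $x$ with surjective differential and with $w_1(x)\not\in gZ(G)$ modulo $p$, and then invokes the Waring-type theorem of Larsen--Shalev (\cite[Theorem 3.3]{LSh1}) to write $\overline{w_1(x)}^{-1}\bar g$ as $w_2(\bar y)w_3(\bar z)$ in $G(\mathbb{F}_p)$; the coset of $w_1(x)$ then absorbs the error. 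You instead apply the Hensel lift symmetrically to all three words, producing sets $A_i\subseteq G(\mathbb{F}_p)$ of positive density bounded below independently of $p$, and finish with the Gowers/Nikolov--Pyber quasirandomness product theorem, using the Landazuri--Seitz lower bound to drive the minimal representation degree to infinity. What your route buys is independence from \cite{LSh1}: the only black boxes are Borel dominance, Lang--Weil, Landazuri--Seitz, and the (elementary, character-theoretic) Gowers trick, and the argument treats the three words symmetrically without fussing about avoiding the center at the lifting stage. What the paper's route buys is brevity for a reader already equipped with the Larsen--Shalev machinery, and it makes transparent why the same scheme with two words immediately runs into the center (Theorem~\ref{prop:2.words}), since \cite[Theorem 3.3]{LSh1}-type results for two words must exclude central elements, whereas your three-set quasirandomness argument cannot be pushed down to two sets. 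Two small remarks on your write-up: deleting $\{w_i=1\}$ from $U_i$ is harmless but unnecessary (you never use it); and when bounding $|A_i|$ from below you should say explicitly that the fibres $w_i^{-1}(a)$ have uniformly bounded degree (by Bezout-type bounds for the fixed morphism $w_i$) so that the $O(p^{(d-1)\dim G})$ estimate is uniform in $a$ --- this is exactly the ``bounded degree'' point the paper makes for $w_1$.
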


\begin{proof} Let $k=\mathbb{F} _p$. If $p$ is large enough, $G_k$ is simply connected, $\dim G_k=\dim G$, and $G_k(k)$ is isomorphic to the quotient of $G(\mathbb{Z}_p)$ by its first congruence subgroup. Let $n_i$ be the number of letters appearing in $w_i$, $i=1,2,3$. Since $w_1$ is dominant, $\dim w_1 ^{-1} (g) < n_1 \dim(G)$ for every $g\in G_k$. Since the varieties $w_1 ^{-1}(g)$ have bounded degree, we get that there is a constant $c$ such that $|w_1^{-1}(g)|<c|k|^{n_1\dim(G)-1}$ for every $g\in G_k(k)$, and that the set of points $x\in G(k)^{n_1}$ for which the derivative of $w_1$ at $x$ is surjective has size greater than $\frac{1}{c}|k|^{n_1\dim(G)}$.

Fix $g\in G(\mathbb{Z} _p)$. From the previous paragraph, if $p$ is large, there is $x\in G(\mathbb{Z} _p)^{n_1}$ such that $w_1(x)$ is not congruent to any element in $gZ(G)$ modulo $p$, and the derivative of $w_1$ at $x$ is surjective. By Hensel's Lemma, the last condition implies that $w_1(G(\mathbb{Z} _p))$ contains the $G^1(\mathbb{Z} _p)$-coset of $w_1(x)$.
By~\cite[Theorem 3.3]{LSh1}, there are $y\in G(\mathbb{Z} _p)^{n_2}$ and $z\in G(\mathbb{Z} _p)^{n_3}$ such that $w_2(y)w_3(z)$ is congruent modulo $p$ to $w_1(x)^{-1} g$. But then, $w_1(G(\mathbb{Z} _p))\cdot w_2(G(\mathbb{Z} _p))\cdot w_3(G(\mathbb{Z} _p))$ contains the whole $G^1(\mathbb{Z} _p)$-coset of $w_1(x)w_2(y)w_3(z)$. In particular, it contains $g$.
\end{proof}

Note that the condition on $p$ is necessary, since for small $p$
$w_i$ may be an identity on some finite image of $G(\mathbb{Z} _p)$.

The story for product of two word values is more complicated. Indeed, even for finite quasisimple groups, not every element is a product of two word values. However, the exceptional elements must be in the center.

\begin{thm} \label{prop:2.words} Let $w_1,w_2$ be non-trivial words, and let $G$ be a semi-simple simply connected algebraic group over $\mathbb{Q}$. If $p$ is large enough, then $w_1(G(\mathbb{Z} _p))\cdot w_2(G(\mathbb{Z} _p)) \supset G(\mathbb{Z} _p) \setminus (Z(G)\cdot G^1(\mathbb{Z} _p))$.
\end{thm}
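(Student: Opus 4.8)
The plan is to mimic the proof of Theorem~\ref{prop:3.words}, but now we only get to use two words, so we must be more careful about the reduction modulo $p$. Fix $g \in G(\mathbb{Z}_p)$ with $\bar g \notin Z(G_k)\cdot\{1\}$, i.e. the image $\bar g$ of $g$ in $G_k(k)$ is non-central. As before, let $n_i$ be the number of letters in $w_i$; since $w_1$ is dominant on $G_k$ there is a constant $c$ (independent of $p$ for $p$ large) so that the set $S \subseteq G_k(k)^{n_1}$ of tuples at which $dw_1$ is surjective has $|S| > \frac{1}{c}|k|^{n_1\dim G}$, and similarly each fiber $w_1^{-1}(h)$ has at most $c|k|^{n_1\dim G - 1}$ points. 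So for $p$ large the set of values $w_1(x)$ with $x \in S$ is all of $G_k(k)$ with at most $c^2|k|^{n_1 \dim G - 1}$ exceptions weighted by multiplicity --- in particular $w_1(S)$ misses at most $c^2$ elements? No: more carefully, $|w_1(S)| \geq |S|/(\max \text{fiber size}) > \frac{1}{c^2}|k|$, which is not yet enough. The right statement is that the \emph{image} $w_1(G_k(k))$ of the full word map is large: by~\cite[Theorem 3.3]{LSh1} (or the results of~\cite{LST1}), for $p$ large $w_1(G_k(k))$ contains every non-central element of $G_k(k)$ as a product of two $w_1$-values --- but here I only have one $w_1$. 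Let me instead recall the key input I actually need.

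The key input is the following statement about the finite group $H = G_k(k)$: for $p$ large, for \emph{every} non-central $h \in H$, there exist $a \in w_1(H)$ and $b \in w_2(H)$ with $ab = h$. This is exactly the finite-field analogue, and it is available from the literature on word maps on finite simple (or quasisimple) groups: by Larsen--Shalev--Tiep type results, or directly from~\cite[Theorem 3.3]{LSh1} combined with the fact that $w_i(H)$ is a normal subset generating $H$, one knows that $w_1(H)\cdot w_2(H)$ contains $H \setminus Z(H)$ once $|H|$ (equivalently $p$, for fixed $G$) is large enough. Granting this, the argument runs as follows. Choose $\bar a \in w_1(H)$, $\bar b \in w_2(H)$ with $\bar a\bar b = \bar g$. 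Since $\bar a \in w_1(H)$ and (enlarging $p$ if necessary) $w_1$ is dominant so that \emph{every} fiber of $w_1 : H^{n_1} \to H$ over a given value meets the locus where $dw_1$ is surjective --- this is the point where I must be careful --- we can pick $x \in G(\mathbb{Z}_p)^{n_1}$ reducing to a tuple $\bar x \in H^{n_1}$ with $w_1(\bar x) = \bar a$ \emph{and} $dw_1$ surjective at $\bar x$. Then by Hensel's lemma, $w_1(G(\mathbb{Z}_p))$ contains the entire $G^1(\mathbb{Z}_p)$-coset of $w_1(x)$, whose reduction mod $p$ is $\bar a$. Likewise pick any $y \in G(\mathbb{Z}_p)^{n_2}$ reducing to a tuple with $w_2$-value $\bar b$; then $w_1(x)^{-1}g \cdot w_2(y)^{-1} \in G^1(\mathbb{Z}_p)$ (it reduces to $\bar a^{-1}\bar g\bar b^{-1} = 1$), and since $G^1(\mathbb{Z}_p)$ is already captured inside the $w_1$-coset, we can correct: there is $h_1 \in G^1(\mathbb{Z}_p)$ with $w_1(x)h_1 \in w_1(G(\mathbb{Z}_p))$ and $(w_1(x)h_1)(w_2(y)) = g$. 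Hence $g \in w_1(G(\mathbb{Z}_p))\cdot w_2(G(\mathbb{Z}_p))$.

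The step I expect to be the main obstacle is the one flagged above: ensuring that I can realize the prescribed residue $\bar a$ of $w_1$ \emph{at a point where $dw_1$ is already surjective}, so that Hensel applies. In Theorem~\ref{prop:3.words} this was finessed because there one only needed \emph{some} value avoiding a coset of the center, so a counting argument (the surjectivity locus is a positive-proportion subset, the ``bad'' fiber is a lower-order subset) sufficed. Here $\bar a$ is essentially forced on us by the requirement $\bar a \bar b = \bar g$, so I cannot simply count. The fix is to observe that $\bar a$ is not uniquely forced: $w_1(H)$ and $w_2(H)$ are each large normal subsets, so for the given $\bar g$ there is a positive proportion of pairs $(\bar a,\bar b) \in w_1(H)\times w_2(H)$ with $\bar a\bar b = \bar g$ (this follows from the mixing/character estimates underlying~\cite[Theorem 3.3]{LSh1}), and among the corresponding $\bar a$'s, a positive proportion of preimages under $w_1 : H^{n_1}\to H$ lie in the surjectivity locus --- because that locus has density bounded below while each fiber has size bounded above by $c|k|^{n_1\dim G -1}$. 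Quantitatively: the number of pairs with $\bar a\bar b=\bar g$ is $\gg |w_1(H)|$, the surjectivity locus has $\gg |k|^{n_1\dim G}$ points, the union of the ``bad'' fibers (over $\bar a$'s not hit from the locus) is $\ll |k|^{n_1\dim G - 1}\cdot|H|$, which is a lower-order term, so for $p$ large there is a valid $\bar a$. Once this density bookkeeping is in place the rest is a routine Hensel-plus-lift argument as in Theorem~\ref{prop:3.words}. The only other thing to double-check is the degree bound making the constant $c$ uniform in $p$, which is standard (the varieties $w_i^{-1}(h)$ and the surjectivity loci are cut out by equations of bounded complexity depending only on the words and on $G$, not on $p$).
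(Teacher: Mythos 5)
Your overall plan (Hensel's lemma plus a finite-group covering result, with a density argument to make the two compatible) is reasonable, but it is genuinely different from the paper's proof, and as written it has gaps. The paper does \emph{not} appeal to a covering result of the form $w_1(H)\cdot w_2(H)\supset H\setminus Z(H)$ for the finite quotient $H=G_k(k)$, nor does it try to realize a prescribed residue $\bar a$ in the smooth locus. Instead it uses Lemma~\ref{lem:word.value.regular.split} to produce a single tuple in the smooth locus whose $w_1$-value is a regular element of a split torus, applies Hensel to capture that element's $G^1(\mathbb{Z}_p)$-coset, and then --- this is the key move you are missing --- uses that $w_1(G(\mathbb{Z}_p))$ is closed under conjugation to conclude that it contains the \emph{entire preimage of that regular split conjugacy class}. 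The same for $w_2$, and then the Ellers--Gordeev result (Lemma~\ref{lem:s.s.conj.class}) that the product of two such conjugacy classes is all of $G(k)\setminus Z$, together with the trivial Lemma~\ref{lem:lift}, finishes. Conjugation-invariance of the word image turns ``one good point'' into ``a full conjugacy class,'' which is why no density bookkeeping over $\bar a$'s is needed at all.

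Your version has two concrete problems. First, the ``key input'' you want --- that for every non-central $\bar g$ there exist $\bar a\in w_1(H)$, $\bar b\in w_2(H)$ with $\bar a\bar b=\bar g$, for $H$ quasisimple and $p$ large --- is not Theorem~3.3 of \cite{LSh1} as you invoke it; it is precisely the finite-group shadow of the theorem being proved, and in the paper it is derived from \cite{LST1} together with \cite{EG} (a self-contained argument would cite \cite{LST2} or reproduce the conjugacy-class route). Second, your density argument is not correct as stated. You want not just one pair $(\bar a,\bar b)$ but $\gg |H|$ many, so that some $\bar a$ survives removal of the ``bad'' set; this is a mixing statement about a \emph{fixed} non-central $\bar g$, which does not follow from $w_i(H)$ being large normal subsets via a naive Gowers-type count (taking $C=\{\bar g^{-1}\}$ kills the quasirandomness estimate). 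It does follow from character bounds, but that needs to be argued. Worse, the quantity you write for the union of bad fibers, $|k|^{n_1\dim G-1}\cdot|H|$, exceeds the size $|k|^{n_1\dim G}$ of the whole ambient space $H^{n_1}$, so the inequality conveys nothing. The correct estimate is that the bad $\bar a$'s lie in $G(k)\setminus w_1\bigl((G^{n_1}\setminus V)(k)\bigr)$, which by Lang--Weil has size $O(|k|^{\dim G-1})=O(|H|/p)$; this is what you need to compare to the count of valid pairs. In short, the idea can be made to work, but it requires a genuine mixing estimate you have not supplied, and the paper's conjugacy-class route is both shorter and avoids the issue entirely.
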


To prove the theorem we need some preparations.
The following Lemma is proved in~\cite{EG}:
\begin{lemma} \label{lem:s.s.conj.class} Suppose $G$ is a simply connected Chevalley group over a finite field $k$ of size greater than $4$. Let $C_1,C_2$ be two conjugacy classes of regular elements from the maximally split torus in $G(k)$. Then $C_1\cdot C_2 \supset G(k)\setminus Z(G)$.
\end{lemma}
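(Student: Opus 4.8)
The plan is to reduce the assertion about the product of two classes to an existential assertion about a single class, and then to realize the required element by the Bruhat (Gauss) decomposition, using that $s_1,s_2$ are regular and that $G$ is simply connected.

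\textbf{Reduction to one class.} Put $C_i=s_i^{G(k)}$. If $c_1c_2=g$ with $c_1=xs_1x^{-1}$, conjugating by $x^{-1}$ rewrites this as $s_1^{-1}(x^{-1}gx)\in C_2$; conversely, if $s_1^{-1}g'\in C_2$ for some conjugate $g'$ of $g$ then $g'=s_1\cdot(s_1^{-1}g')\in C_1C_2$, whence $g\in C_1C_2$ by conjugation-invariance. So it suffices to show: \emph{for every non-central $g$ there is a conjugate $g'$ of $g$ with $s_1^{-1}g'$ conjugate to $s_2$.} Here simple-connectedness is essential: by Steinberg's theorem centralizers of semisimple elements are connected, so $C_G(s_2)$ is the maximal torus containing $s_2$, and by Lang's theorem the $G(k)$-class of the regular element $s_2$ is the full set of $k$-points of its geometric class, i.e.\ the fibre $\pi^{-1}(\pi(s_2))$ of the adjoint (Chevalley) quotient $\pi\colon G\to T/W$; that fibre moreover consists only of semisimple elements, a torus having no nontrivial unipotents. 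Thus the task becomes one about invariants alone: find $g'\sim g$ with $\pi(s_1^{-1}g')=\pi(s_2)$ (for classical $G$: with $s_1^{-1}g'$ having the characteristic polynomial of $s_2$).

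\textbf{Producing $g'$ via the big cell.} I would look for $g'$ inside the big Bruhat cell $\Omega=U^-TU^+$, writing $g'=u_-\tau u_+$. Regularity enters through the fact that for a regular $t\in T$ the twisting map $v\mapsto {}^tv\cdot v^{-1}$ on $U^{\pm}$ is bijective — on the graded pieces of the descending central series it is multiplication by $\beta(t)-1$ on each root subgroup $U_\beta$, and $\beta(t)\ne1$ for all roots $\beta$. This already yields $s_iU^{+},\,s_iU^{-}\subseteq s_i^{G}$, and, more importantly, enough room to move the unipotent coordinates $u_\pm$ so as to control $\pi(s_1^{-1}g')$; non-centrality of $g$ is precisely what guarantees that $g$ has a conjugate in general position with respect to a pair of opposite Borels, i.e.\ one admitting such a Gauss form. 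Carrying this out — by induction on the semisimple rank, reducing to rank-one ($\SL_2$-type) subsystems — is the combinatorial core of \cite{EG}; its output is that a non-central $g$ has a conjugate $g'$ for which $s_1^{-1}g'$ is manifestly conjugate to any prescribed regular element of the split torus, in particular to $s_2$. (A dimension count makes this plausible: over $\overline k$ the morphism $h\mapsto\pi(s_1^{-1}hgh^{-1})$ on $G$ is dominant when $s_1$ is regular and $g$ non-central; but landing on the given point $\pi(s_2)$, not merely on a dense subset, genuinely requires the explicit construction.)

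\textbf{Main obstacle, and a remark.} The hard step is the big-cell bookkeeping of the previous paragraph, and in particular extracting the sharp bound $|k|>4$ rather than only ``$|k|$ large in terms of the root system''. The size hypothesis is what leaves enough elements in each root subgroup, and enough regular elements avoiding a handful of spurious relations, to run the induction; the finitely many groups of small rank over small fields are dealt with separately (directly, or by the finer counting in \cite{EG}). One should also note that excluding $Z(G)$ is not an artefact of the method: the reduction above shows $C_1C_2\cap Z(G)=\{\,s_2\cdot {}^ws_1:w\in W\,\}\cap Z(G)$, which is empty for generic $s_1,s_2$, so no version covering central elements can hold for all pairs of regular classes.
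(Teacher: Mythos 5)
The paper gives no proof of this lemma at all; it is quoted directly from Ellers--Gordeev \cite{EG}. Your outline is an accurate summary of the strategy of that reference (reduction to a single class, Steinberg connectedness of centralizers plus Lang's theorem to identify $C_2$ with the $k$-points of a fibre of the Steinberg map, and then Gauss decomposition with prescribed regular semisimple part), and, like the paper itself, you defer the combinatorial core to \cite{EG}, so your attempt matches the paper's treatment as closely as is possible.
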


\begin{lemma} \label{lem:lift} Suppose that $f:G \rightarrow H$ is a homomorphism, $X_1,X_2$ are subsets of $H$ and $X_1\cdot X_2=F\subset H$. Then $f^{-1}(X_1)\cdot f^{-1}(X_2)=f^{-1}(F)$.
\end{lemma}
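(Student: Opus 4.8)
The plan is to establish the two inclusions $f^{-1}(X_1)\cdot f^{-1}(X_2)\subseteq f^{-1}(F)$ and $f^{-1}(F)\subseteq f^{-1}(X_1)\cdot f^{-1}(X_2)$ separately, and then combine them.

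First I would check the easy inclusion. Take $a\in f^{-1}(X_1)$ and $b\in f^{-1}(X_2)$. Since $f$ is a homomorphism, $f(ab)=f(a)f(b)\in X_1 X_2=F$, so $ab\in f^{-1}(F)$; this step uses nothing but the multiplicativity of $f$.

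For the reverse inclusion I would argue by ``splitting'' an element along a chosen factorization of its image. Fix $g\in f^{-1}(F)$. Then $f(g)\in F=X_1 X_2$, so we may write $f(g)=x_1 x_2$ with $x_i\in X_i$. Pick a preimage $a\in G$ with $f(a)=x_1$, and set $b:=a^{-1}g$. Then $f(b)=f(a)^{-1}f(g)=x_1^{-1}(x_1 x_2)=x_2\in X_2$, so $a\in f^{-1}(X_1)$, $b\in f^{-1}(X_2)$, and $g=ab\in f^{-1}(X_1)\cdot f^{-1}(X_2)$.

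The only subtlety — and the step I would be careful about — is the choice of a preimage $a$ of $x_1$: it exists precisely because $f$ is surjective (equivalently, because $X_1\subseteq f(G)$), and in fact the statement fails without this (e.g. $G$ trivial, $H=\mathbb{Z}/2$, $X_1=X_2=\{1\}$, $F=\{0\}$). In every application of the lemma $f$ is the reduction map $G(\mathbb{Z}_p)\to G(\mathbb{F}_p)$, which is onto for simply connected $G$ and $p$ large, so this costs nothing; I would either add ``surjective'' to the hypotheses or explicitly flag the use of surjectivity in the proof. No other step requires more than the homomorphism property, so there is no real obstacle beyond this bookkeeping.
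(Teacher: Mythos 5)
Your proof is essentially the same as the paper's: take $g\in f^{-1}(F)$, factor $f(g)=x_1x_2$ with $x_i\in X_i$, lift $x_1$ to some $a\in f^{-1}(X_1)$, and observe that $a^{-1}g\in f^{-1}(X_2)$, while the forward inclusion is just multiplicativity. You are also right that the lemma as stated tacitly assumes $f$ is surjective (or at least $X_1,X_2\subset f(G)$) — the paper's phrase ``there are $g_1\in f^{-1}(X_1)$ and $g_2\in f^{-1}(X_2)$ such that $f(g)=f(g_1)f(g_2)$'' silently uses this, and your counterexample shows the hypothesis cannot be dropped; since in the application $f$ is the surjective reduction map $G(\mathbb{Z}_p)\to G(\mathbb{F}_p)$, this is harmless, but flagging it is a genuine improvement.
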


\begin{proof} Let $g\in f^{-1}(F)$. By assumption, there are $g_1\in f^{-1}(X_1)$ and $g_2\in f^{-1}(X_2)$ such that $f(g)=f(g_1)f(g_2)$. Therefore, $g_1^{-1} g\in g_2\ker(f) \subset f^{-1}(X_2)$. Hence, there is $g_3\in f^{-1}(X_2)$ such that $g=g_1g_3$.
\end{proof}

We will make use of the following:

\begin{lemma} \label{lem:word.value.regular.split} Let $V\subset G^d$ 
be a proper rational sub-variety. For $p$ large enough, there is a $d$-tuple 
$(x_1,\ldots,x_d)\in G^d(k)\setminus V(k)$ such that $w_1(x_1,\ldots,x_d)$ 
is a regular element in a split torus.
\end{lemma}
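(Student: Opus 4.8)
The plan is to exhibit a single constructible condition on a $d$-tuple that simultaneously forces the word value to be a regular element of a split maximal torus, show this condition is satisfiable over $\overline{\mathbb{Q}}$ (hence defines a nonempty open subvariety), and then use a point-counting argument over $\mathbb{F}_p$ to guarantee that for $p$ large this open set still has $\mathbb{F}_p$-points lying outside the given proper subvariety $V$. First I would fix a split maximal torus $T\subset G$ and let $W\subset G^d$ be the (locally closed) subvariety of tuples $(x_1,\dots,x_d)$ for which $w_1(x_1,\dots,x_d)\in T$ and this value is \emph{regular} (i.e. its centralizer is exactly $T$, equivalently no root vanishes on it). The key claim is that $W$ is nonempty: since $G$ is (after enlarging $p$) split over $\mathbb{F}_p$ and $T$ is a split torus, a generic element $t\in T$ is regular, and by Borel's theorem $w_1$ is dominant, so $w_1^{-1}(t)$ is nonempty for generic $t$; picking such a $t$ and a tuple mapping to it produces a point of $W$. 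Thus $W$ is a nonempty constructible set, and it contains a nonempty Zariski-open subset $W^\circ$ of its closure $\overline{W}$.

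Next I would pass to the finite field. For $p$ large, $G_k$ with $k=\mathbb{F}_p$ is split and $T_k$ is a split torus, and all the varieties involved are defined over $\mathbb{Z}_p$ (or over $\mathbb{Z}[1/N]$ for suitable $N$) with geometrically irreducible components of bounded dimension and degree independent of $p$. By the Lang--Weil estimates, $|\overline{W}(k)|$ is of order $|k|^{\dim \overline{W}}$, the locus where the word value is non-regular (a root vanishes) cuts out a proper closed subset of $\overline{W}$ and so contributes only $O(|k|^{\dim\overline{W}-1})$ points, and similarly $V(k)\cap \overline{W}(k)$ has size $O(|k|^{\dim\overline{W}-1})$ since $V$ is a proper subvariety of $G^d$ (here I need that $\overline{W}$ is not contained in $V$, which follows because $\overline{W}$ is a specific variety coming from the word map and $V$ is an arbitrary proper subvariety — more carefully, one restricts attention to an irreducible component of $\overline{W}$ not contained in $V$, which exists as long as $\overline{W}\not\subset V$; if some component \emph{is} contained in $V$ one simply works with another, and the hypothesis that $V$ is proper guarantees at least one component of $\overline{W}$ escapes it provided $\overline{W}$ itself is not contained in $V$). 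Subtracting the two error terms from the main term, for $p$ sufficiently large there is a point $(x_1,\dots,x_d)\in \overline{W}(k)$ that lies in $W^\circ(k)$, avoids $V(k)$, and has regular word value in $T(k)$, which is exactly what is required.

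The main obstacle is the interaction between the ``arbitrary proper subvariety $V$'' and the variety $\overline{W}$ built from the word: one must ensure that $V$ does not swallow all of the relevant locus. The clean way to handle this, which I would adopt, is to observe that the set $W$ of tuples with regular split word value is not merely nonempty but Zariski-\emph{dense} in the fibers of $w_1$ over regular elements of $T$; more usefully, one shows $\dim W = d\dim G - \dim G + \dim T$ (the full dimension of $w_1^{-1}(T)$), so $\overline{W}$ has a component of this dimension, and since this dimension depends only on $G$ and $w_1$, not on $V$, while $V$ has dimension $< d\dim G$, a counting argument comparing $|k|^{\dim \overline{W}}$ against $|k|^{\dim V}$ and $|k|^{\dim \overline W -1}$ still needs $\dim \overline W > \dim V$ — which may fail. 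The correct fix, and the one I expect the authors use, is instead to intersect $\overline W$ (or rather $G^d$) with $V$ \emph{inside} $\overline W$: $V\cap \overline W$ is a proper closed subset of $\overline W$ \emph{as long as} $\overline W\not\subset V$, and one arranges the latter by noting that $W$ hits a Zariski-dense subset of $w_1^{-1}(\text{generic }t)$ for each generic $t$, so $W$ cannot lie in the proper subvariety $V$ unless $V$ contains all those fibers, contradicting properness of $V$ since the union of these fibers is dense in $G^d$. Once $\overline W\not\subset V$ is secured, the Lang--Weil count goes through and the lemma follows.
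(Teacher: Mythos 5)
The paper takes a different route, and the route you propose has a genuine gap. The paper never works with a fixed torus inside $G^d$: it restricts $w_1$ to $G^d\setminus V$, observes that this restriction is still dominant, and therefore that the image misses only a proper closed subset $W\subset G$, whose $k$-points number $O(p^{\dim G-1})$ by Lang--Weil. The other ingredient is the theorem of Larsen--Shalev--Tiep~\cite{LST1}, which says that a positive proportion of elements of $G(k)$ that are regular in a split torus are actually $w_1$-values of tuples in $G^d(k)$. Comparing the two counts produces a regular split-torus $w_1$-value outside $W(k)$, which is what is needed. The crucial arithmetic input, that over the \emph{finite} field many word values are regular and lie in a \emph{$k$-split} torus, is exactly the content of \cite{LST1} and cannot be extracted from Borel's dominance theorem and dimension counting alone, because ``lies in a $k$-split maximal torus'' is not a Zariski-open condition over $k$.

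Your approach of fixing one split torus $T$ and setting $W=\{(x_1,\dots,x_d):w_1(x_1,\dots,x_d)\in T^{\mathrm{reg}}\}\subset G^d$ has the fatal feature that $\overline W$ is itself a \emph{proper} subvariety of $G^d$ (its dimension is $\dim T+(d-1)\dim G<d\dim G$), so the hypothesis allows $V$ to contain $\overline W$ outright, in which case there is nothing left to count. Your attempted patch is wrong: the union of the fibers $w_1^{-1}(t)$ over generic $t\in T$ is just $w_1^{-1}(T)$, which is \emph{not} dense in $G^d$ (its dimension is again $\dim T+(d-1)\dim G$), so properness of $V$ gives no contradiction. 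To make a variety-level density argument work you would have to let the torus vary over all its conjugates, at which point $\overline W$ becomes all of $G^d$ but you lose control over which tori are $k$-split; disentangling that brings you back to needing a result of the \cite{LST1} type. In short, the missing idea in your proposal is precisely the finite-field input from \cite{LST1}, and the density claim you use to justify $\overline W\not\subset V$ is false as stated.
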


\begin{proof}
Since $w:G^d\to G$ is dominant and its differential is surjective on a 
Zariski open set, the restriction $w|_{G^d\setminus V}$ is also dominant. 
Let $W\subset G$ be a proper algebraic subset containing 
$G\setminus w(G\setminus V)$.

By the Lang--Weil estimates (see, for example \cite{LW}), $|W(k)| \leq Cp^{\dim G-1}$ for some constant 
$C$. On the other hand, by~\cite{LST1}, there is a constant $c$ such that 
at least $c|G(k)|$ elements that are regular in some split torus are $w_1$-values. Hence, there is a word value that is both a regular element in a 
split torus, and not in $W(k)$.
\end{proof}

\begin{proof}[Proof of Theorem \ref{prop:2.words}] Let $k=\mathbb{F} _p$. If $p$ is large enough then $G_k$ is semisimple, and the map $G(\mathbb{Z} _p) \rightarrow G(k)$ is onto. Moreover, in this case, Borel's theorem implies that the map $w_1:G^n \rightarrow G$ is dominant, and, hence, generically smooth; let $V \subset G^n$ be the locus of points where $dw_1$ is not surjective.

Pick $(g_1,\ldots,g_d)\in G(\mathbb{Z} _p)^d\setminus V(\mathbb{Z} _p)$. The derivative of $w_1$ at $(g_i)$ is a $\mathbb{Z} _p$-linear map between the relative tangent space to $G^d$ at $(g_i)$ over $\Spec \mathbb{Z} _p$, which is identified with $\mathfrak{g}(\mathbb{Z} _p)^d$, and the relative tangent space to $G$ at $w_1(g_i)$, which is identified with $\mathfrak{g}(\mathbb{Z} _p)$. Since the reduction of this map is onto, it follows that the map itself is onto. It follows that the whole coset $w_1(g_i)G^1(\mathbb{Z} _p)$ is contained in the image of $G(\mathbb{Z} _p)^d$ under $w_1$. Since $w_1(G(\mathbb{Z} _p))$ is closed under conjugation, it follows that it contains the pre-image of a regular semi-simple conjugacy class. The same is true for the image of $w_2$. By Lemmas~\ref{lem:s.s.conj.class} and~\ref{lem:lift}, it follows that $w_1(G(\mathbb{Z} _p))\cdot w_2(G(\mathbb{Z} _p))$ contains all elements whose reduction modulo $p$ is not central.
\end{proof}

We now draw some conclusions for adelic groups.
Results on finite word width in some adelic groups were proved
by Segal in \cite{Se2}.

\begin{corollary} Let $G$ be a semi-simple connected algebraic group over $\mathbb{Q} $.

(i) For any two non-trivial words, $w_1$ and $w_2$, the set $w_1(G(\widehat{\mathbb{Z}}))w_2(G(\widehat{\mathbb{Z}}))$ has positive measure in $G(\widehat{\mathbb{Z}})$.

(ii)If $w_3$ is another non-trivial word, then
$w_1(G(\widehat{\mathbb{Z} })) \cdot w_2(G(\widehat{\mathbb{Z} })) \cdot
w_3(G(\widehat{\mathbb{Z}}))$ is open in $G(\widehat{\mathbb{Z}})$.
\end{corollary}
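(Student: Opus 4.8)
The plan is to deduce both statements from Theorems~\ref{prop:3.words} and~\ref{prop:2.words} together with the product decomposition $G(\widehat{\mathbb{Z}}) = \prod_p G(\mathbb{Z}_p)$ (valid up to finitely many primes of bad reduction, where one has only an open subgroup of the corresponding local factor). The guiding principle is that word maps act coordinatewise on a restricted/direct product, so statements about $w_i(G(\widehat{\mathbb{Z}}))$ reduce to simultaneous statements about $w_i(G(\mathbb{Z}_p))$ for every $p$, and the only subtlety is uniformity of the ``large $p$'' hypotheses and controlling the finitely many small (or bad) primes.

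For part (ii): fix $g = (g_p)_p \in G(\widehat{\mathbb{Z}})$. For the finitely many ``small'' primes $p \le p_0$ (those where either $G$ has bad reduction or Theorem~\ref{prop:3.words} fails), I would argue that $w_1(G(\mathbb{Z}_p)) w_2(G(\mathbb{Z}_p)) w_3(G(\mathbb{Z}_p))$ still contains an open neighborhood of some point: by the opening Lemma of the section each $w_i(G(\mathbb{Z}_p))$ contains a nonempty open set, and a product of three such sets is open. For the remaining primes $p > p_0$, Theorem~\ref{prop:3.words} gives $w_1(G(\mathbb{Z}_p)) w_2(G(\mathbb{Z}_p)) w_3(G(\mathbb{Z}_p)) = G(\mathbb{Z}_p)$ outright, so $g_p$ is hit for every large $p$. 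Taking the product of the open neighborhoods at the small primes with the full groups at the large primes produces an open subset of $G(\widehat{\mathbb{Z}})$ contained in $w_1(G(\widehat{\mathbb{Z}})) w_2(G(\widehat{\mathbb{Z}})) w_3(G(\widehat{\mathbb{Z}}))$; since the latter set is translation-homogeneous in no obvious way, I would instead note it suffices to produce one open subset, and openness of the whole product set then follows because the product of the open sets can be centered (by replacing each $w_i$-value choice) around any target in the large-$p$ factors — more cleanly, one observes the product set is a union of such boxes and hence open. The main point to get right here is that the degree bounds and Lang--Weil constants making Theorem~\ref{prop:3.words} work are uniform in $p$, which they are because $G$ is a fixed $\mathbb{Q}$-group, so $p_0$ depends only on $G, w_1, w_2, w_3$.

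For part (i): the same decomposition applies, but now I only have Theorem~\ref{prop:2.words}, which for large $p$ covers $G(\mathbb{Z}_p) \setminus (Z(G) \cdot G^1(\mathbb{Z}_p))$ rather than all of $G(\mathbb{Z}_p)$. So instead of hitting a prescribed $g$, I aim to show the image set has positive Haar measure. At each large prime $p > p_0$, Theorem~\ref{prop:2.words} shows $w_1(G(\mathbb{Z}_p)) w_2(G(\mathbb{Z}_p))$ contains $G(\mathbb{Z}_p) \setminus (Z(G) \cdot G^1(\mathbb{Z}_p))$, which has Haar measure $1 - O(p^{-\dim G + \dim Z})$ — in particular measure bounded below, and with the complementary measures summable (since $\dim Z < \dim G$ for $G$ semisimple of positive dimension). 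At each small prime $p \le p_0$, the product of the two nonempty open sets $w_1(G(\mathbb{Z}_p))$ and $w_2(G(\mathbb{Z}_p))$ has positive measure. The product over all primes of sets of positive measure with $\prod_p (\text{measure}) > 0$ — guaranteed by the summability of the defects at large primes — has positive measure in $G(\widehat{\mathbb{Z}})$ by the standard formula for the measure of a box in a restricted product. Hence $w_1(G(\widehat{\mathbb{Z}})) w_2(G(\widehat{\mathbb{Z}}))$, which contains this box, has positive measure.

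The main obstacle I anticipate is purely bookkeeping rather than conceptual: one must (a) handle the finitely many primes of bad reduction, where $G(\mathbb{Z}_p)$ is replaced by whatever group-scheme model one has and the cited theorems may not literally apply — here the section's opening Lemma (word images contain open sets) is the right substitute, together with the fact that a finite set of ``bad'' factors contributes only a positive constant to a measure or an open box to openness; and (b) confirm the uniformity of the thresholds in the finite-field inputs (Lang--Weil, \cite{LST1}, \cite{LSh1}, Lemma~\ref{lem:s.s.conj.class}) as $p \to \infty$, so that a single $p_0 = p_0(G, w_1, w_2, w_3)$ works. Neither is serious, since all the relevant constants come from the geometry of the fixed $\mathbb{Q}$-variety defining the word map; I would state this uniformity once and invoke it for both parts.
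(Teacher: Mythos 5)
Your proposal is correct and takes essentially the same approach as the paper: decompose $G(\widehat{\mathbb{Z}})$ as $\prod_p G(\mathbb{Z}_p)$, use the section's opening Lemma to get open (hence positive-measure) sets at the finitely many small or bad primes, invoke Theorem~\ref{prop:2.words} (resp.\ Theorem~\ref{prop:3.words}) for $p>p_0$, and control the infinite product of local measures via the $1 - O(p^{-\dim G})$ bound. The paper's proof is simply a tighter write-up of this argument.
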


\begin{proof} First we note that, for any $p$ and any non-trivial word $w$, the set $w(G(\mathbb{Z} _p))$ has a non-empty interior. Indeed, since $w$ is dominant, there is $(g_1,\ldots,g_d)\in G(\mathbb{Z} _p)$ such that the derivative of $w$ at $(g_1,\ldots,g_d)$ surjective.

Suppose that Theorem~\ref{prop:2.words} holds for $p>p_0$.
 
The set $K=\prod_{p \leq p_0} w_1(\mathbb{Z}_p)w_2(\mathbb{Z}_p)$ is open, so has positive measure $\lambda_K$ in $\prod_{p \leq p_0}G(\mathbb{Z}_p)$. Since
\[
w_1(\widehat{\mathbb{Z}})w_2(\widehat{\mathbb{Z}})=\prod_p w_1(\mathbb{Z}_p)w_2(\mathbb{Z}_p) \supset K \times \prod_{p>p_0} \left( G(\mathbb{Z}_p)\setminus(Z(G)\cdot G^1(\mathbb{Z}_p)) \right),
\]
we get that the measure of $w_1(\widehat{\mathbb{Z}})w_2(\widehat{\mathbb{Z}})$ is greater than or equal to 
$$
 \lambda_K\cdot\prod_p \left( 1-\frac{|Z(G)|}{|G(\mathbb{F}_p)|}\right) \geq \lambda_K\cdot\prod_p \left( 1-\frac{C} {p^{\dim G}} \right)
$$ 
 for some constant $C$. Since the dimension of $G$ is greater than 1, the product converges to a positive number.

The second claim follows similarly from Theorem~\ref{prop:3.words}.

\end{proof}

\section{The Commutator Word}

In this section we consider the image of the commutator map in special linear groups over local rings of arbitrary characteristic. Let $O$ be a local ring with residue field $K$ and maximal ideal $\mathfrak{m}$. We set $G=\SL_n$, and let $\mathfrak{g},\mathfrak{g}^*$ be the Lie algebra of $G$ and its dual. The Killing form $\langle X,Y \rangle=\trace(XY)$ is conjugation-invariant and non-degenerate for every $K$.

Let $G^k(O)$ denote the $k$-th congruence subgroup of $G(O)$, i.e., the kernel of $G(O) \to G(O/m^k)$. It is well known that $G^k(O)/ G^{k+1}(O)$ is isomorphic (as a $G(O)$-module) to $\mathfrak{g}(K) \otimes_K \mathfrak{m}^k/\mathfrak{m}^{k+1}$, and the action of $G(O)$ on the tensor product is via the action of $G(K)$ on $\mathfrak{g}(K)$.

\begin{proposition}
\label{prop:centralizer}
Let $\bar g_1,\bar g_2 \in G(K)$ be elements such that the group
$H= \langle \bar g_1 ,\bar g_2\rangle \subset G(K)$ does not have any fixed vectors in $\mathfrak{g}^*(K)$. Then for any lift $g$ of $[\bar g_1,\bar g_2]$ to 
$G(O)$ there are
lifts $g_1$ and $g_2$ of $\bar g_1$ and $\bar g_2$ such that $g = [g_1, g_2]$.
\end{proposition}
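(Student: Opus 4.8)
The plan is to lift the relation up the congruence filtration by successive approximation. Start with arbitrary lifts $g_1^{(1)},g_2^{(1)}\in G(O)$ of $\bar g_1,\bar g_2$; since $[g_1^{(1)},g_2^{(1)}]$ reduces to $[\bar g_1,\bar g_2]$ modulo $\mathfrak m$, we have $g^{-1}[g_1^{(1)},g_2^{(1)}]\in G^1(O)$. Suppose inductively that we have lifts $g_1^{(k)},g_2^{(k)}$ of $\bar g_1,\bar g_2$ with $g^{-1}[g_1^{(k)},g_2^{(k)}]\in G^k(O)$; I will produce $u,v\in G^k(O)$ so that $g_i^{(k+1)}:=g_i^{(k)}\cdot(u\text{ resp. }v)$ still lift $\bar g_1,\bar g_2$ (automatic, as $u,v\in G^k(O)\subseteq G^1(O)$) and $g^{-1}[g_1^{(k+1)},g_2^{(k+1)}]\in G^{k+1}(O)$. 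Then $g_i^{(k+1)}\equiv g_i^{(k)}\pmod{G^k(O)}$, so the sequences $(g_1^{(k)}),(g_2^{(k)})$ are $\mathfrak m$-adically Cauchy and (using completeness of $O$, as in the applications; otherwise one runs the same argument in each $G(O/\mathfrak m^N)$) converge to lifts $g_1,g_2$ with $[g_1,g_2]=g$.

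For the inductive step, a direct computation gives the perturbation formula
\[
[g_1 u,\,g_2 v]=\bigl({}^{g_1}u\bigr)\bigl({}^{g_1g_2}v\bigr)\bigl({}^{g_1g_2}u^{-1}\bigr)\bigl({}^{g_1g_2g_1^{-1}}v^{-1}\bigr)\,[g_1,g_2],
\]
where ${}^{h}x=hxh^{-1}$. Apply this with $g_1=g_1^{(k)}$, $g_2=g_2^{(k)}$ and $u,v\in G^k(O)$. Modulo $G^{k+1}(O)$ the four conjugates lie in $G^k(O)$ and commute with one another (since $[G^k,G^k]\subseteq G^{2k}\subseteq G^{k+1}$ for $k\geq1$), and the $G(O)$-action on $G^k(O)/G^{k+1}(O)\cong\mathfrak g(K)\otimes_K\mathfrak m^k/\mathfrak m^{k+1}$ factors through $\mathrm{Ad}$ on $\mathfrak g(K)$. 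Writing $\bar u,\bar v$ for the classes of $u,v$ and using $\overline{g_1g_2g_1^{-1}}=\bar c\,\bar g_2$ with $\bar c:=[\bar g_1,\bar g_2]$ (the common reduction of $g$), the factor in front of $[g_1,g_2]$ has class $\Phi(\bar u,\bar v)$, where $\Phi$ is $\phi_1,\phi_2$ tensored with the identity of $\mathfrak m^k/\mathfrak m^{k+1}$ and
\[
\phi_1=\mathrm{Ad}(\bar g_1)-\mathrm{Ad}(\bar g_1\bar g_2),\qquad \phi_2=\mathrm{Ad}(\bar g_1\bar g_2)-\mathrm{Ad}(\bar c\,\bar g_2)\in\mathrm{End}_K(\mathfrak g(K)).
\]
Consequently, if $t:=g^{-1}[g_1^{(k)},g_2^{(k)}]\in G^k(O)$ has class $\bar t$, then the class of $g^{-1}[g_1^{(k)}u,g_2^{(k)}v]$ equals $\bar t+\mathrm{Ad}(\bar c)^{-1}\Phi(\bar u,\bar v)$. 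So the induction goes through as soon as $\Phi$, equivalently $\phi_1\oplus\phi_2:\mathfrak g(K)\oplus\mathfrak g(K)\to\mathfrak g(K)$, is surjective.

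This surjectivity is the crux, and it is exactly where the hypothesis is used. By finite-dimensionality over $K$, $\phi_1\oplus\phi_2$ is onto iff no nonzero $\xi\in\mathfrak g^*(K)$ satisfies $\xi\circ\phi_1=\xi\circ\phi_2=0$. Let $\mathrm{Ad}^{\vee}$ denote the contragredient action on $\mathfrak g^*(K)$, so $\xi\circ\mathrm{Ad}(h)=\mathrm{Ad}^{\vee}(h^{-1})\xi$. Then $\xi\circ\phi_1=0$ reads $\mathrm{Ad}^{\vee}(\bar g_1^{-1})\xi=\mathrm{Ad}^{\vee}(\bar g_2^{-1})\mathrm{Ad}^{\vee}(\bar g_1^{-1})\xi$, i.e. $\eta:=\mathrm{Ad}^{\vee}(\bar g_1^{-1})\xi$ is fixed by $\bar g_2$; and $\xi\circ\phi_2=0$, after cancelling $\mathrm{Ad}^{\vee}(\bar g_2^{-1})$, gives $\mathrm{Ad}^{\vee}(\bar g_1^{-1})\xi=\mathrm{Ad}^{\vee}(\bar c^{-1})\xi$, hence $\eta=\mathrm{Ad}^{\vee}(\bar c^{-1}\bar g_1)\eta=\mathrm{Ad}^{\vee}(\bar g_2\bar g_1\bar g_2^{-1})\eta$ (using $\bar c^{-1}\bar g_1=\bar g_2\bar g_1\bar g_2^{-1}$); combined with the $\bar g_2$-invariance of $\eta$ this forces $\eta=\mathrm{Ad}^{\vee}(\bar g_1)\eta$, so $\eta$ is fixed by $\bar g_1$ as well. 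Thus $\eta$ is fixed by all of $H=\langle\bar g_1,\bar g_2\rangle$, so $\eta=0$ by hypothesis, whence $\xi=\mathrm{Ad}^{\vee}(\bar g_1)\eta=0$. Therefore $\phi_1\oplus\phi_2$ is surjective, the inductive step is justified, and the proposition follows. The main obstacle is precisely this third step — computing the infinitesimal effect of perturbing $g_1,g_2$ and recognizing that the no‑fixed‑vector condition on $\mathfrak g^*(K)$ is exactly what makes the resulting linear map surjective; the commutator bookkeeping and the passage to the limit are routine once that is in hand.
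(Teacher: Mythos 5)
Your proof is correct and rests on the same essential idea as the paper's: the linearization of the commutator map at $(\bar g_1,\bar g_2)$ is surjective precisely because $\langle\bar g_1,\bar g_2\rangle$ has no fixed covector, and this surjectivity lets one correct the error one congruence level at a time. You carry this out as an explicit Newton iteration through the quotients $G^k(O)/G^{k+1}(O)$ via the exact perturbation identity $[g_1u,g_2v]=({}^{g_1}u)({}^{g_1g_2}v)({}^{g_1g_2}u^{-1})({}^{g_1g_2g_1^{-1}}v^{-1})[g_1,g_2]$, whereas the paper computes the differential infinitesimally (over dual numbers) and then simply asserts that surjectivity of the residue-level differential suffices for lifting. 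The one genuinely different ingredient is the proof of surjectivity: the paper identifies $\mathfrak g$ with $\mathfrak g^*$ via the trace form and shows that the orthogonal complement of the image of the differential is $Z_{\mathfrak g}(\bar g_1)\cap Z_{\mathfrak g}(\bar g_2)$, while you take an annihilating functional $\xi\in\mathfrak g^*$ and chase it through the contragredient action to exhibit a covector fixed by both $\bar g_1$ and $\bar g_2$. Your version matches the hypothesis (stated in $\mathfrak g^*$) with no pairing needed, and it remains valid even when the trace form on $\mathfrak{sl}_n$ is degenerate, namely when $\mathrm{char}(K)\mid n$ and the identity matrix lies in the radical --- a case in which the paper's identification of $\mathfrak g$ with $\mathfrak g^*$, and hence its ``non-degenerate for every $K$'' remark, breaks down. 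The only caveat, which you flag yourself, is that the passage to the limit in the iteration uses $\mathfrak m$-adic completeness of $O$; the paper's one-line reduction to surjectivity of the differential carries the same implicit assumption.
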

%\begin{remark} This amounts to saying that if $H$ does not have any fixed points on $\mathfrak{g}^*_K$ then the derivative of the commutator word at any lift $(g_1,g_2)$ is surjective.
%\end{remark}

\begin{proof} It is enough to show that, under the assumptions of the proposition, the derivative of the commutator map at $(\bar g_1,\bar g_2)$ is onto. We first compute the derivative of the commutator map. Suppose that $g_1,g_2\in G$, that $X,Y\in \mathfrak{g}$, and that $\epsilon ^2=0$. Then
\[
[g_1(1+\epsilon X),g_2(1+\epsilon Y)]=g_1(1+\epsilon X)g_2(1+\epsilon Y)(1-\epsilon X)g_1 ^{-1} (1-\epsilon Y)g_2 ^{-1}=
\]
\[
[g_1,g_2]+\epsilon \left( g_1Xg_2g_1 ^{-1} g_2 ^{-1}+ g_1g_2Yg_1 ^{-1} g_2 ^{-1} -g_1g_2X g_1 ^{-1} g_2 ^{-1} -g_1g_2g_1 ^{-1} Y g_2 ^{-1}\right) =
\]
\[
[g_1,g_2] \left( 1+ \epsilon \left( X^{g_2 g_1 ^{-1} g_2^{-1}} + Y^{g_1 ^{-1} g_2 ^{-1}} - X^{g_1 ^{-1} g_2 ^{-1}} - Y^{g_2 ^{-1}}\right) \right) 
\]
So the derivative of the commutator map at $(g_1,g_2)$ is
\[
 \big[ (X,Y) \mapsto \left( X^{g_2}-X \right)^{g_1 ^{-1}} + \left( Y^{g_1 ^{-1}} - Y \right)\big]^{g_2^{-1}}
\]
For any $g\in G$, the image of $Z \mapsto Z^g-Z$ is the orthogonal complement, with respect to the Killing form $\langle\cdot,\cdot\rangle$, of the centralizer of $g$ because
\[
\left( \forall Z \right) \langle Z^g-Z, W \rangle =0 \iff 
\left( \forall Z \right) \langle Z^g, W \rangle= \langle Z,W \rangle \iff 
\left( \forall Z \right) \langle Z, W^g-W \rangle 
\]
\[
\iff W=W^g.
\]
It follows that the orthogonal complement to the image of the derivative of the commutator map is the intersection of $Z(g_1)$ and $Z(g_2)^{g_1 ^{-1}}$, which is the intersection of $Z(g_1)$ and $Z(g_2)$.
\end{proof}

The assumption in \ref{prop:centralizer} is necessary in the foliowing sense:
 
\begin{proposition}
\label{lem:nosolution}
Let $K$ be a finite field and $\bar g \in G(K)$ an element such that, for any $\bar g_1,\bar g_2$ satisfying $\bar g = [\bar g_1, \bar g_2]$,
the group $\langle \bar g_1, \bar g_2\rangle$ has a fixed point in $\mathfrak{g}^*(K)$. Then there exists a local ring $O$ with residue field $K$ and some lift of $\bar g$
to $G(O)$ which is not a commutator in $G(O)$.
\end{proposition}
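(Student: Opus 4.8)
The plan is to take for $O$ the local $K$-algebra $O_N = K[\epsilon_1,\dots,\epsilon_N]/\mathfrak{m}^2$, where $\mathfrak{m}=(\epsilon_1,\dots,\epsilon_N)$, for a suitably large $N$ depending on $|G(K)|$, and then to produce the required lift by counting: I will bound the number of lifts of $\bar g$ to $G(O_N)$ that are commutators and show that this bound is strictly smaller than the total number of lifts. The reason for letting the dimension of $\mathfrak{m}$ grow, rather than just using ordinary dual numbers, is that over a finite field a proper subspace still has positive codimension, so inflating $N$ makes the ``commutator locus'' occupy an arbitrarily small fraction of the fibre over $\bar g$, which eventually beats the \emph{fixed} number $|G(K)|^2$ of competing pairs; for $N=1$ the naive count is far too weak.

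First I would record the elementary structure of $G^1(O_N)=\ker\bigl(G(O_N)\to G(K)\bigr)$. Since $\mathfrak{m}^2=0$, the ideal $\mathfrak{m}$ is a $K$-vector space of dimension $N$, the determinant of $1+u$ (for a matrix $u$ with entries in $\mathfrak{m}$) equals $1+\trace u$, and $(1+u)(1+v)=1+(u+v)$; hence $G^1(O_N)$ is the abelian group $\mathfrak{g}(K)\otimes_K\mathfrak{m}\cong\mathfrak{g}(K)^{\,N}$, consistently with the description $G^1/G^2\cong\mathfrak{g}(K)\otimes_K\mathfrak{m}/\mathfrak{m}^2$ recalled above, as here $G^2(O_N)=1$. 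In particular the set of lifts of $\bar g$ to $G(O_N)$ is a coset of $G^1(O_N)$ and has exactly $q^{N(n^2-1)}$ elements, where $q=|K|$.

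Next I would determine which lifts are commutators. Use the embedding $G(K)\hookrightarrow G(O_N)$ coming from $K\hookrightarrow O_N$. If a lift $g$ of $\bar g$ equals $[g_1,g_2]$, let $\bar h_i$ be the reduction of $g_i$, so that $[\bar h_1,\bar h_2]=\bar g$ in $G(K)$; writing $g_i=\hat h_i(1+u_i)$ with $\hat h_i\in G(K)$ the image of $\bar h_i$ and $u_i\in\mathfrak{g}(K)\otimes_K\mathfrak{m}$, the expansion in the proof of Proposition~\ref{prop:centralizer} — which uses only that each $u_i$ is central in $O_N$ and that all products $u_iu_j$ vanish, both valid since $\mathfrak{m}^2=0$ — gives the \emph{exact} identity
\[
g=[g_1,g_2]=[\hat h_1,\hat h_2]\bigl(1+\Phi_{\bar h_1,\bar h_2}(u_1,u_2)\bigr)=\bar g\bigl(1+\Phi_{\bar h_1,\bar h_2}(u_1,u_2)\bigr),
\]
where $\Phi_{\bar h_1,\bar h_2}=d[\,\cdot\,,\cdot\,]_{(\bar h_1,\bar h_2)}\otimes\mathrm{id}_{\mathfrak{m}}$ is the differential of the commutator map at $(\bar h_1,\bar h_2)$ tensored with $\mathfrak{m}$. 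By the final computation in that same proof, the image of $\Phi_{\bar h_1,\bar h_2}$ is $I_{\bar h_1,\bar h_2}\otimes_K\mathfrak{m}$, where $I_{\bar h_1,\bar h_2}\subset\mathfrak{g}(K)$ is the Killing-orthogonal complement of $Z(\bar h_1)\cap Z(\bar h_2)=\mathfrak{g}(K)^{\langle\bar h_1,\bar h_2\rangle}$. Since the Killing form identifies $\mathfrak{g}(K)$ with $\mathfrak{g}^*(K)$ $G(K)$-equivariantly, $\mathfrak{g}(K)^{\langle\bar h_1,\bar h_2\rangle}\cong\mathfrak{g}^*(K)^{\langle\bar h_1,\bar h_2\rangle}$, which by hypothesis is nonzero for every pair with $[\bar h_1,\bar h_2]=\bar g$; hence $I_{\bar h_1,\bar h_2}$ is a proper subspace of $\mathfrak{g}(K)$, so $\bigl|I_{\bar h_1,\bar h_2}\otimes_K\mathfrak{m}\bigr|\le q^{N(n^2-2)}$.

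Finally I would conclude by counting. Every lift of $\bar g$ that is a commutator lies in $\bigcup \bar g\cdot\bigl(1+I_{\bar h_1,\bar h_2}\otimes_K\mathfrak{m}\bigr)$, the union taken over the finitely many pairs $(\bar h_1,\bar h_2)\in G(K)^2$ with $[\bar h_1,\bar h_2]=\bar g$. There are at most $|G(K)|^2$ such pairs, and each set in the union has at most $q^{N(n^2-2)}$ elements, so at most $|G(K)|^2\,q^{N(n^2-2)}$ lifts of $\bar g$ are commutators, whereas there are $q^{N(n^2-1)}$ lifts in all. Choosing $N$ with $q^{N}>|G(K)|^2$ therefore exhibits a lift of $\bar g$ in $G(O_N)$ that is not a commutator, which is what we want. (If no pair $(\bar h_1,\bar h_2)$ with $[\bar h_1,\bar h_2]=\bar g$ exists, then $\bar g$ is not a commutator in $G(K)$, hence no lift of $\bar g$ is a commutator in $G(O)$ for any $O$ with residue field $K$, and the statement is trivial.) I do not expect a serious obstacle here; the one point requiring care is the one flagged above, namely that the first-order commutator formula of Proposition~\ref{prop:centralizer} is genuinely \emph{exact} over $O_N$ because $\mathfrak{m}^2=0$ annihilates all higher-order terms — the rest is bookkeeping with finite-dimensional $K$-vector spaces.
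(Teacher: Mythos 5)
Your proof is correct, but it takes a genuinely different route from the paper's. Both arguments rest on the same engine: over a local $K$-algebra $O$ with $\mathfrak m^2=0$ and a $K$-splitting, a commutator $[g_1,g_2]$ lifting $\bar g$ must lie in $\hat g\,(1+\mathrm{Im}\,\Phi_{\bar h_1,\bar h_2})$ where $\Phi_{\bar h_1,\bar h_2}=d[\cdot,\cdot]_{(\bar h_1,\bar h_2)}\otimes\mathrm{id}_{\mathfrak m}$, and by the computation in Proposition~\ref{prop:centralizer} together with the hypothesis, each $\mathrm{Im}\,\Phi_{\bar h_1,\bar h_2}$ is a \emph{proper} $K$-subspace of $\mathfrak g(K)\otimes\mathfrak m$. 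Where you diverge is in how this is turned into a non-commutator. The paper constructs $O=K\oplus\bigoplus_{x\in\mathfrak g(K)}e_xK$ (so $\mathfrak m$ has a basis indexed by all of $\mathfrak g(K)$) and names a specific lift $g=\bar g\times\prod_x xe_x$; if $g=[g_1,g_2]$, then a \emph{single} pair of reductions $(\bar g_1,\bar g_2)$ must solve $x\in\mathrm{Im}\,\Phi_{\bar g_1,\bar g_2}$ for \emph{every} $x\in\mathfrak g(K)$, forcing $\Phi$ to be surjective, contradiction. You instead use $O_N=K[\epsilon_1,\dots,\epsilon_N]/\mathfrak m^2$ and a pigeonhole count: at most $|G(K)|^2 q^{N(n^2-2)}$ of the $q^{N(n^2-1)}$ lifts are commutators, so $q^N>|G(K)|^2$ produces one that is not. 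The paper's construction is explicit and avoids any counting, at the cost of a much larger ring ($\dim_K\mathfrak m=q^{n^2-1}$); yours is non-constructive but yields a considerably smaller $O$ (roughly $N>2\log_q|G(K)|\approx 2(n^2-1)$), and the counting template generalizes readily to other word maps. Both are complete and correct; the shared subtlety — that the first-order commutator expansion is exact when $\mathfrak m^2=0$ — is the crux, and you flag it appropriately.
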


\begin{proof}
Let $O = K \oplus \bigoplus_{x\in \mathfrak{g}(K)} e_x K$,
where the elements $e_x$ satisfy $\forall x,y\in \mathfrak{g}(K),~e_x e_y =0$. We identify $G(O)$ with $G(K) \times \prod_{x\in \mathfrak{g}(K)}  \mathfrak{g}(K)e_x$, and let $g$ be the element
$g = \bar g  \times \prod_{x\in \mathfrak{g}_K} x e_x\in G(O)$.
If $g$ is a commutator then there exist $\bar g_1$ and $\bar g_2$
such that for any $x$ the equation
$$
x=\left( 1 - \bar g_2 \right) \cdot h_1 + \left( 1 - \bar g_1^{-1}\right) \cdot h_2
$$
has a solutions, but this contradicts the assumption on $\bar g$.

%--notice that is most case for any $x$ we can find $\bar g_1$ and $\bar g_2$ such that the above equation has a solution.
\end{proof}

\begin{corollary}
There exists a local ring $O$ such that not every element in $\SL_n(O)$ is a commutator.
\end{corollary}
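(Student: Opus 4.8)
The plan is to apply Proposition~\ref{lem:nosolution}: it suffices to exhibit a finite field $K$ and an element $\bar g\in\SL_n(K)$ such that every factorization $\bar g=[\bar g_1,\bar g_2]$ has the property that $\langle\bar g_1,\bar g_2\rangle$ fixes a nonzero vector in $\mathfrak{g}^*(K)$. The cleanest way to guarantee this is to find a field $K$ for which the \emph{whole} group $\SL_n(K)$ has a nonzero fixed vector in $\mathfrak{g}^*(K)$; then every subgroup, and in particular $\langle\bar g_1,\bar g_2\rangle$ for every choice of $\bar g_1,\bar g_2$, fixes it, so the hypothesis of Proposition~\ref{lem:nosolution} is satisfied by \emph{every} $\bar g\in\SL_n(K)$. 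Taking $\bar g=1$ (which is of course a commutator in $\SL_n(K)$, so that the statement is not vacuous), Proposition~\ref{lem:nosolution} then yields a local ring $O$ with residue field $K$ and an element of the first congruence subgroup $G^1(O)$ that is not a commutator in $\SL_n(O)$ — a slightly sharper conclusion than the corollary asks for.

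It remains to produce such a $K$. I would take $K=\mathbb{F}_2$ and $n=2$, so that $\SL_2(\mathbb{F}_2)\cong S_3$. Since $\operatorname{char}K=2$ divides $n=2$, the trace form $\langle X,Y\rangle=\trace(XY)$ is degenerate on $\mathfrak{sl}_2(\mathbb{F}_2)$ with one-dimensional radical $\mathbb{F}_2 I$; in particular $\mathfrak{sl}_2(\mathbb{F}_2)$ contains the trivial submodule $\mathbb{F}_2 I$, and $\mathfrak{sl}_2(\mathbb{F}_2)/\mathbb{F}_2 I$ is the $2$-dimensional standard module $V$ of $S_3$, which is irreducible. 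Dualizing, $\mathfrak{sl}_2^*(\mathbb{F}_2)$ is an extension $0\to V^*\to\mathfrak{sl}_2^*(\mathbb{F}_2)\to\mathbb{F}_2\to 0$ of the trivial module by the irreducible $V^*$. This extension splits: the obstruction lies in $H^1(S_3,V^*)$, and restricting to a Sylow $2$-subgroup $C$ (cyclic of order $2$) one checks that $V|_C$, hence $V^*|_C$, is the regular representation $\mathbb{F}_2[C]$, whose higher cohomology vanishes; since restriction to $C$ is injective on the $2$-primary part of cohomology, $H^1(S_3,V^*)=0$. Hence $\mathfrak{sl}_2^*(\mathbb{F}_2)^{S_3}\neq 0$, which is what we need, and the corollary follows. (Equivalently, one may simply verify by hand that the span of $\{\,X-gXg^{-1}:g\in\SL_2(\mathbb{F}_2),\ X\in\mathfrak{sl}_2(\mathbb{F}_2)\,\}$ is a proper subspace of $\mathfrak{sl}_2(\mathbb{F}_2)$, i.e. that the coinvariants are nonzero.)

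The one point that requires care — and the only real content of the argument — is that it is $\mathfrak{g}^*(K)$, not $\mathfrak{g}(K)$, that appears in Proposition~\ref{lem:nosolution}: in characteristics dividing $n$ the trace form is degenerate, so $\mathfrak{g}$ and $\mathfrak{g}^*$ need not be isomorphic as $\SL_n(K)$-modules, and what one needs is precisely that $\mathfrak{g}(K)$ has a nonzero \emph{trivial quotient} (equivalently, that $\mathfrak{g}^*(K)$ has invariants), which is exactly the cohomological splitting above. If one prefers to treat general $n$ rather than produce a single pair $(n,O)$, one runs the analogous verification over $\mathbb{F}_2$ for $n$ even — where again $I\in\mathfrak{sl}_n(\mathbb{F}_2)$ generates a trivial submodule — or over another small field adapted to $n$; this is the step I expect to be the main obstacle, as it requires knowing that the relevant $\mathrm{Ext}^1$ vanishes (or that the coinvariants are nonzero) for that particular $n$. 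Finally, one could instead simply take $\bar g$ to be an element that is not a commutator in $\SL_n(K)$ at all for some small $K$ (for instance a transvection in $\SL_2(\mathbb{F}_2)\cong S_3$), so that the hypothesis of Proposition~\ref{lem:nosolution} holds vacuously; this is even quicker, but the choice $\bar g=1$ has the merit of exhibiting the failure already inside the first congruence subgroup.
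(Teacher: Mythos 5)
Your proof is correct, but it takes a genuinely different and more laborious route than the paper, and in the process it unnecessarily restricts the range of validity.

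Both you and the paper apply Proposition~\ref{lem:nosolution} with $\bar g = I$. The paper's one-line justification is that $\bar g = I$ satisfies the hypothesis for \emph{any} finite field $K$ and \emph{any} $n$: if $[\bar g_1,\bar g_2]=1$, then $H=\langle\bar g_1,\bar g_2\rangle$ is abelian, and for any nonscalar $h\in H$ the functional $X\mapsto\trace(hX)$ on $\mathfrak{sl}_n(K)$ is nonzero (since the orthogonal complement of $\mathfrak{sl}_n$ in $\mathfrak{gl}_n$ under the trace pairing is exactly $K\cdot I$) and is fixed by every $g$ commuting with $h$ (since $\trace(hgXg^{-1})=\trace(g^{-1}hgX)=\trace(hX)$). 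If every element of $H$ is scalar, $H$ is central and fixes all of $\mathfrak{g}^*$. This works uniformly, no cohomology required. You instead fix $n=2$, $K=\mathbb{F}_2$ and prove the much stronger assertion that all of $\SL_2(\mathbb{F}_2)\cong S_3$ has a nonzero fixed vector in $\mathfrak{sl}_2^*(\mathbb{F}_2)$, via the vanishing of $H^1(S_3,V^*)$ checked by restriction to a Sylow $2$-subgroup. The computation is correct (the coinvariants of $\mathfrak{sl}_2(\mathbb{F}_2)$ are indeed one-dimensional), but it is overkill and loses the generality: the abelian-subgroup observation above obviates the cohomology entirely and shows the corollary fails for $\SL_n(O)$ over a suitable $O$ with \emph{any} prescribed finite residue field, whereas your route requires tailoring $K$ to $n$. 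Your observation that it is $\mathfrak{g}^*$ rather than $\mathfrak{g}$ that matters is a valid and worthwhile caution in bad characteristic, but for this corollary with $\bar g = I$ it never bites. Your closing alternative — take $\bar g$ a transposition in $\SL_2(\mathbb{F}_2)\cong S_3$, not a commutator at all, so the hypothesis holds vacuously — is also correct (and in fact doesn't even need Proposition~\ref{lem:nosolution}: any lift of a non-commutator is a non-commutator, already over $O=K$), but you rightly note that the choice $\bar g=I$ is more informative since it exhibits the failure inside the congruence kernel.
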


\begin{proof} The element $\bar g=I$ satisfies the conditions of Lemma \ref{lem:nosolution}.
\end{proof} 

\bigskip

%I do not know when the condition in Theorem~\ref{prop:centralizer} are satisfied --
%one can use the Ore conjecture to get elements $\bar g_1$ and $\bar g_2$ with
%$[\bar g_1, \bar g_2] = \bar g$ but I do not see how to show the group generated by them
%is "large". I think I can proof the following

%\begin{lemma}
%If $G = \SL_n$ and $\bar g = \lambda \Id$. Then there exist $\bar g_1,\bar g_2$ satisfying
%the condition in Theorem~\ref{prop:centralizer}, if and only if $\lambda$ is
%primitive $n$-th root of $1$ in $K$.
%\end{lemma}

On the other hand, Proposition \ref{prop:centralizer} implies

\begin{corollary} \label{cor:primitive.scalar} Suppose that $\lambda \in K$ is a primitive $n$-th root of 1, that $\bar g= \lambda I$, and that $g\in \SL_n(O)$ is some lift of $\bar g$. Then $g$ is a commutator in $\SL_n(O)$. 
\end{corollary}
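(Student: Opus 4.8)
The plan is to apply Proposition \ref{prop:centralizer} directly, so the whole task reduces to exhibiting two elements $\bar g_1, \bar g_2 \in \SL_n(K)$ with $[\bar g_1,\bar g_2] = \lambda I$ such that $H = \langle \bar g_1, \bar g_2 \rangle$ has no nonzero fixed vector in $\mathfrak{g}^*(K) = \mathfrak{sl}_n(K)^*$. The natural candidates are the standard ``clock and shift'' matrices that realize the Heisenberg-type relation $xy = \lambda yx$: take $\bar g_2 = \mathrm{diag}(1,\lambda,\lambda^2,\ldots,\lambda^{n-1})$ (the clock) and $\bar g_1$ the cyclic permutation matrix sending $e_i \mapsto e_{i+1}$ (the shift), possibly rescaled by a scalar $c$ to land in $\SL_n$; since $\lambda$ is a primitive $n$-th root of unity, $\det$ of the shift is $\pm 1$ and one can choose $c$ with $c^n = \pm 1$ in $K$ (here one uses that $\lambda \in K$ forces $K$ to contain these roots of unity, or one enlarges nothing because $K$ is already assumed to contain $\lambda$; a small check is needed that a suitable $c$ exists, or one instead arranges $\bar g_1$ to be a product of the shift with a diagonal correction). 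A direct computation then gives $[\bar g_1,\bar g_2] = \bar g_1 \bar g_2 \bar g_1^{-1} \bar g_2^{-1} = \lambda I$ (conjugating the clock by the shift multiplies it by $\lambda$), which is exactly $\bar g$.

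The second and more substantial step is to verify that $H$ has no fixed point in $\mathfrak{sl}_n(K)^*$. Using the Killing form $\langle X, Y\rangle = \trace(XY)$ to identify $\mathfrak{g}^* \cong \mathfrak{g}$ equivariantly (it is conjugation-invariant and non-degenerate, as noted in the paper), it is equivalent to show that the only $X \in \mathfrak{sl}_n(K)$ commuting with both $\bar g_1$ and $\bar g_2$ is $X = 0$. The centralizer of the clock $\bar g_2$ inside $\mathfrak{gl}_n$ consists of diagonal matrices (since $\lambda$ is primitive of order $n$, the diagonal entries $1,\lambda,\ldots,\lambda^{n-1}$ are distinct), and the centralizer of the cyclic shift $\bar g_1$ inside $\mathfrak{gl}_n$ consists of circulant matrices; the intersection of ``diagonal'' and ``circulant'' is the scalar matrices $cI$, and the only scalar matrix in $\mathfrak{sl}_n$ is $0$ (here one uses that $n$ is invertible — but note $\lambda$ primitive $n$-th root forces $\mathrm{char}\,K \nmid n$, so this is automatic). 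Hence $H$ fixes no nonzero vector of $\mathfrak{g}^*(K)$, and Proposition \ref{prop:centralizer} applies.

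The main obstacle I anticipate is purely bookkeeping rather than conceptual: one must make sure the clock and shift can be normalized to lie in $\SL_n(K)$ (not just $\GL_n$) while preserving the commutator relation, which may require a mild argument that the needed scalar correction $c$ exists in $K$ — but since $K$ already contains the primitive $n$-th root $\lambda$ and hence all $n$-th roots of unity, and $\det(\text{shift}) = (-1)^{n-1}$, a suitable $c$ is available (or one adjusts by conjugation, which does not affect the no-fixed-vector property). A secondary point worth stating carefully is the identification $\mathfrak{g}^* \cong \mathfrak{g}$ and the fact that a vector fixed by $H$ corresponds to an $X \in \mathfrak{sl}_n$ centralized by both generators; once that translation is in place, the centralizer computation is elementary. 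I would also remark that this corollary is the key special case behind Theorem \ref{thm:commutatorPSL}: when $n \mid p-1$ the field $\mathbb{F}_p$ contains a primitive $n$-th root of unity, so every lift of a scalar in $\PSL_n(\mathbb{Z}_p)$ is a commutator, and combined with Theorem \ref{thm:commutator} for the non-scalar classes one obtains that every element of $\PSL_n(\mathbb{Z}_p)$ is a commutator.
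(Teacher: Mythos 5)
Your proposal follows exactly the paper's route: invoke Proposition~\ref{prop:centralizer} with the clock matrix $\mathrm{diag}(1,\lambda,\ldots,\lambda^{n-1})$ and the cyclic shift, identify $\mathfrak{g}^*\cong\mathfrak{g}$ via the trace form, and observe that a common centralizing element must be simultaneously diagonal and circulant, hence scalar, hence $0$ in $\mathfrak{sl}_n$ since $\mathrm{char}\,K\nmid n$. Swapping which matrix is $\bar g_1$ and which is $\bar g_2$ is cosmetic.

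The one place where you are more careful than the paper is the determinant issue, and your instinct is correct that something needs checking --- but the fix you sketch does not actually work in all cases, and the paper silently has the same problem. The shift has determinant $(-1)^{n-1}$ and can indeed be repaired by a diagonal (non-scalar) correction $bP$ with $\det b = -1$: this changes neither the relation $[bP,D]=\lambda I$ nor the centralizer computation. The clock, however, has determinant $\lambda^{n(n-1)/2}$, which equals $-1$ whenever $n$ is even, and the only matrices conjugated to $\lambda$ times themselves by the shift are scalar multiples of the clock; so the only available correction is a global scalar $c$ with $c^n=-1$. Such a $c$ exists in $K$ iff $2n \mid |K|-1$, which the hypotheses do not guarantee (e.g.\ $n=2$, $K=\mathbb{F}_7$: $2\mid 6$ but $4\nmid 6$, and $-1$ is not a square in $\mathbb{F}_7$). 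So for $n$ even the clock need not lie in $\SL_n(K)$, and the argument as written --- yours and the paper's --- does not produce the required pair $(\bar g_1,\bar g_2)\in \SL_n(K)^2$. The corollary itself remains true: one can take $\bar g_2$ to be the companion matrix of $t^n+1$ (which lies in $\SL_n(K)$ for $n$ even, with the same $K$-rational conjugacy class as $\lambda I\cdot$clock), and then a suitable $\bar g_1\in \SL_n(K)$ conjugating $\bar g_2$ to $\lambda\bar g_2$ exists because the norm map from $K[\bar g_2]^*$ to $K^*$ is surjective over a finite field, allowing the determinant of a candidate $\bar g_1$ to be corrected inside the centralizer. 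If you want a complete proof you should either add this argument for the case $n$ even, or restrict the clock-and-shift construction to $n$ odd and handle $n$ even separately.
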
 
\begin{proof} By Proposition \ref{prop:centralizer}, it is enough to show that there are $\bar g_1,\bar g_2\in\SL_n(K)$ such that $[\bar g_1,\bar g_2]=\lambda I$ and there is no non-zero vector in $\mathfrak{sl}_n^*(K)$ that is fixed by both $\bar g_1$ and $\bar g_2$. Let $\bar g_1$ be the diagonal matrix with diagonal $1,\lambda, \lambda ^2,\ldots,\lambda ^{n-1}$, and let $\bar g_2$ be the permutation matrix representing the cycle $(1,2,\ldots,n)$. If $X\in \mathfrak{sl}^*_n(K)$ satisfies $\bar g_1 \cdot X= X$, then $X$ is a diagonal matrix. If, in addition, $\bar g_2\cdot X=X$, then $X$ must be scalar. The existence of $\lambda$ implies that $n$ is prime to the characteristic of $K$, so $\trace(X)=0$ implies that $X=0$.
\end{proof} 

\begin{thm} \label{thm:commutator} If $O$ is a local ring whose residue field has more than $n+1$ elements, then every element of $\SL_n(O)$ that is not scalar modulo $\mathfrak{m}$ is a commutator.
\end{thm}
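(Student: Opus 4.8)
The plan is to reduce the theorem to a statement over the residue field $K$ and then settle that with Lemma~\ref{lem:s.s.conj.class}. Let $g\in\SL_n(O)$ have non-scalar reduction $\bar g\in\SL_n(K)$. Since $\SL_n(O)\twoheadrightarrow\SL_n(K)$ and conjugation by a lift of $x\in\SL_n(K)$ covers conjugation of $\bar g$ by $x$, it suffices by Proposition~\ref{prop:centralizer} to prove: for every non-scalar $\bar g\in\SL_n(K)$ there are $\bar g_1,\bar g_2\in\SL_n(K)$ with $[\bar g_1,\bar g_2]=\bar g$ such that $H:=\langle\bar g_1,\bar g_2\rangle$ has no nonzero fixed vector in $\mathfrak g^*(K)$; moreover we may replace $\bar g$ by any conjugate.

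Next I would isolate a clean sufficient condition. Identify $\mathfrak g^*(K)=\mathfrak{sl}_n^*(K)$ with $\mathfrak{gl}_n(K)/K{\cdot}I$ through the trace form; this is a $G(K)$-equivariant isomorphism, valid in every characteristic. An $H$-fixed class is represented by $X\in\mathfrak{gl}_n(K)$ with $hXh^{-1}-X\in K{\cdot}I$ for all $h\in H$. If $\bar g_1$ is regular semisimple then, working in an eigenbasis of $\bar g_1$ and using that its eigenvalues are pairwise distinct, such an $X$ must be diagonal in that basis with scalar part $0$; i.e.\ $X$ centralizes $\bar g_1$. Applying the same to $\bar g_1^{-1}\bar g=\bar g_2\bar g_1^{-1}\bar g_2^{-1}\in H$ (which will be regular semisimple in the construction below) shows $X$ also centralizes $\bar g_1^{-1}\bar g$, hence $X\in Z_{\mathfrak{gl}_n}(\bar g_1)\cap Z_{\mathfrak{gl}_n}(\bar g)=Z_{\mathfrak{gl}_n}(\langle\bar g_1,\bar g\rangle)$. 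Thus if $\langle\bar g_1,\bar g\rangle$ acts absolutely irreducibly on $K^n$, then by Schur's lemma and Jacobson density $X$ is scalar, so $H$ has no nonzero fixed vector in $\mathfrak g^*(K)$. The point of this reformulation is that it is characteristic-free, so the case $\mathrm{char}(K)\mid n$ needs no separate treatment.

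To produce $\bar g_1$, note that since $|K|>n+1$ one can choose $n$ pairwise distinct elements of $K^\times$ with product $1$ (when $|K^\times|=n+1$ this follows from Wilson's theorem by deleting $-1$, and otherwise there is more room), giving a regular element $h$ of the maximally split torus of $\SL_n(K)$. As $\bar g\notin Z(\SL_n)$, Lemma~\ref{lem:s.s.conj.class} with $C_1$ the class of $h$ and $C_2$ the class of $h^{-1}$ gives $\bar g=\bar g_1 b$ with $\bar g_1\sim h$ regular semisimple split and $b\sim h^{-1}\sim\bar g_1^{-1}$ regular semisimple; choosing $\bar g_2$ with $\bar g_2\bar g_1^{-1}\bar g_2^{-1}=b$ yields $[\bar g_1,\bar g_2]=\bar g$. (Here $n+1>4$ for $n\ge4$; the one remaining small case $\SL_2(\mathbb F_4)$ is checked directly.) It then remains to choose $h$ and the decomposition $\bar g=\bar g_1 b$ so that, in addition, $\langle\bar g_1,\bar g\rangle$ acts absolutely irreducibly on $K^n$: since $\bar g_1$ is split regular, a subspace of $\overline K^{\,n}$ invariant under this group is a sum of the (rational) eigenlines of $\bar g_1$ that is also $\bar g$-invariant, so the requirement is just that no proper nonempty partial sum of the eigenlines of $\bar g_1$ be $\bar g$-invariant, and after putting $\bar g$ in rational canonical form its invariant subspaces are explicit. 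I expect this last step to be the main obstacle: the transversality is transparent over $\overline K$, and the real work is realizing it over a field that may have only $n+2$ elements, which one does by a direct count of the relevant conjugacy-class and flag data rather than a Lang--Weil estimate. Everything else is formal; finally, the scalar elements excluded from the theorem are partly recovered by Corollary~\ref{cor:primitive.scalar}.
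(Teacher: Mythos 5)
Your route is genuinely different from the paper's. The paper never invokes Proposition~\ref{prop:centralizer} or Lemma~\ref{lem:s.s.conj.class} in the proof of Theorem~\ref{thm:commutator}. Instead it runs a Gauss--type elimination directly over $O$: using Lemmas~\ref{lem:conjugate.n=2} and~\ref{lem:conjugate.big.n} it conjugates $A$ so that $X^{-1}A^gY^{-1}=D^2$ with $X,Y$ unipotent triangular and $D$ diagonal with distinct entries satisfying $a_i=a_{n+1-i}^{-1}$, then writes $A^g=(XD)(DY)$ and observes that $XD$ and $(DY)^{-1}$ are conjugate because both are cyclic with the same eigenvalues, so $A^g$ is a commutator. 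That argument makes no use of Hensel-type lifting and works uniformly without any analysis of fixed vectors. Your plan --- reduce everything to $\SL_n(K)$ via Proposition~\ref{prop:centralizer} and then exhibit a good decomposition over $K$ --- is in principle a reasonable alternative, and would have the advantage of isolating the lifting step cleanly; but it is the route the authors chose \emph{not} to take, presumably because of the difficulty you yourself flag.

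There is a genuine gap, and it is not merely the step you call ``the main obstacle.'' The sufficient condition you aim for --- absolute irreducibility of $\langle\bar g_1,\bar g\rangle$ on $K^n$, with $\bar g_1$ split regular and $b=\bar g_1^{-1}\bar g$ conjugate to $\bar g_1^{-1}$ --- is in some cases impossible, so the argument as stated cannot go through. Take $n=2$ and $\bar g=\left(\begin{smallmatrix}1&1\\0&1\end{smallmatrix}\right)$. The trace constraint $\trace(b)=\trace(\bar g_1^{-1})$ unpacks to $\trace(\bar g_1^{-1}(\bar g-I))=-(\bar g_1)_{21}=0$, so $\bar g_1$ is forced to be upper triangular, hence shares the eigenvector $e_1$ with $\bar g$, and $\langle\bar g_1,\bar g\rangle$ is reducible no matter what. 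What you actually need is much weaker: $Z_{\mathfrak{gl}_n}(\bar g_1)\cap Z_{\mathfrak{gl}_n}(\bar g)=K\cdot I$, which does hold in this example (e.g.\ $\bar g_1=\mathrm{diag}(2,3)$ and $\bar g_2=\left(\begin{smallmatrix}1&2\\0&1\end{smallmatrix}\right)$ over $\mathbb F_5$ do the job), but is strictly weaker than irreducibility --- Schur's lemma is a one-way implication. So two things need fixing: first, replace ``irreducible'' by ``the support graph of $\bar g$ in the $\bar g_1$-eigenbasis is connected,'' which is the correct translation of the scalar-centralizer condition; and second, actually prove that a decomposition with this property exists for every non-central $\bar g$ and every $|K|>n+1$. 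You explicitly defer the latter (``I expect this last step to be the main obstacle\ldots a direct count''), and without it the proof is incomplete. The paper sidesteps the entire issue by working with cyclic matrices over $O$ directly.
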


To show this we need the following.

\begin{lemma} \label{lem:conjugate.n=2} If $A\in\GL_2(O)$ is not central modulo $\mathfrak{m}$ and $\alpha \in O$, then there is $X\in \SL_2(O)$ such that $\left( XAX^{-1} \right) _{1,1}=\alpha$.
\end{lemma}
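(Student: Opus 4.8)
The plan is to build the conjugating matrix out of elementary matrices, reducing the whole problem to solving one linear equation over $O$. The underlying computations are elementary: writing $A=\begin{pmatrix} a & b\\ c & d\end{pmatrix}$, conjugation by $\begin{pmatrix} 1 & t\\ 0 & 1\end{pmatrix}\in\SL_2(O)$ leaves the $(2,1)$-entry unchanged and replaces the $(1,1)$-entry $a$ by $a+tc$; conjugation by $w=\begin{pmatrix} 0 & -1\\ 1 & 0\end{pmatrix}$ replaces the $(2,1)$-entry by $-b$; and conjugation by $\begin{pmatrix} 1 & 0\\ 1 & 1\end{pmatrix}$ replaces the $(2,1)$-entry by $c+a-b-d$. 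So it suffices to find $Y\in\SL_2(O)$ such that the $(2,1)$-entry of $B:=YAY^{-1}$ is a unit of $O$: then, setting $t=(\alpha-B_{11})\,(B_{21})^{-1}\in O$ and $X=\begin{pmatrix} 1 & t\\ 0 & 1\end{pmatrix}Y\in\SL_2(O)$, we get $(XAX^{-1})_{11}=B_{11}+tB_{21}=\alpha$.

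To produce such a $Y$ I would inspect the reduction $\bar A\in\GL_2(K)$. If the $(2,1)$-entry of $\bar A$ is non-zero, take $Y=I$ (the $(2,1)$-entry of $A$ is already a unit, since $O$ is local). If the $(2,1)$-entry of $\bar A$ is zero but its $(1,2)$-entry is non-zero, take $Y=w$, so that the $(2,1)$-entry of $B$ reduces to $-\bar b\neq 0$. The only remaining case is that $\bar A$ is diagonal; then the hypothesis that $A$ is not central modulo $\mathfrak m$ forces $\bar a\neq\bar d$, and conjugating $A$ by $\begin{pmatrix} 1 & 0\\ 1 & 1\end{pmatrix}$ yields a matrix whose $(2,1)$-entry reduces to $\bar a-\bar d\neq 0$. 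In all three cases the $(2,1)$-entry of $B$ reduces to a non-zero element of $K$, hence lies in $O^{\times}$, so the first paragraph applies.

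The only genuinely delicate point is the diagonal case: the $(1,1)$-entry of a diagonal matrix is unaffected by conjugation by any triangular or diagonal matrix, so one must first rotate out of diagonal form, and it is exactly here that the hypothesis "$A$ not central modulo $\mathfrak m$" gets used. I expect this --- together with keeping track that the composite conjugator stays in $\SL_2(O)$ rather than merely in $\GL_2(O)$ --- to be the main (but quite mild) obstacle; everything else is routine $2\times 2$ matrix arithmetic, and the argument needs no hypothesis on $|K|$ at all (the cardinality condition in Theorem~\ref{thm:commutator} is used elsewhere).
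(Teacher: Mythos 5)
Your proof is correct, but it takes a genuinely different and noticeably cleaner route than the paper's. The paper writes out $(XAX^{-1})_{11}=axw-bxz+cyw-dyz$ as a quadratic form in the entries of a general $X=\left(\begin{smallmatrix}x&y\\z&w\end{smallmatrix}\right)$ and then tries to solve the resulting system of two quadratic equations ($(XAX^{-1})_{11}=\alpha$ together with $\det X=1$) by case analysis: if $b\notin\mathfrak m$ one sets $y=0,x=w=1$ and solves linearly for $z$; if $c\notin\mathfrak m$ one transposes; and in the remaining case $b,c\in\mathfrak m$ (where $\bar A$ is diagonal and non-centrality gives $\bar a\neq\bar d$) the paper has to appeal to Hensel's lemma, exhibiting a transverse intersection point of the two reduced conics. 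Your proof avoids the quadratic system entirely: you first conjugate by one of the explicit elementary matrices $I$, $w$, or the lower-unipotent $\left(\begin{smallmatrix}1&0\\1&1\end{smallmatrix}\right)$ to force the $(2,1)$-entry of $B=YAY^{-1}$ to be a unit, and then the final upper-unipotent conjugation turns the problem into the single linear equation $B_{11}+tB_{21}=\alpha$, solvable because $B_{21}\in O^{\times}$. This dispenses with Hensel's lemma and gives a completely explicit $X\in\SL_2(O)$. Both arguments use the non-centrality hypothesis in exactly the same place (the diagonal-mod-$\mathfrak m$ case, where $\bar a\neq\bar d$), and both, as you note, need no lower bound on $|K|$. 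Your computations of the effect of each elementary conjugation on the $(1,1)$ and $(2,1)$ entries are correct, and the composite conjugator is visibly in $\SL_2(O)$.
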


\begin{proof} Suppose $A=\left( \begin{matrix} a&b\\c&d \end{matrix} \right)$. If $X=\left( \begin{matrix} x&y\\z&w \end{matrix} \right)$ has determinant 1, then the top left entry in $XAX^{-1}$ is $axw-bxz+cyw-dyz$. It is easy to see that there is a solution to $axw-bxz+cyw-dyz=\alpha$ and $xw-yz=1$; For instance one can argue as follows. If $b\notin \mathfrak{m}$, taking $y=0, x=w=1$, there is $z$ such that $\det X=1$ and the top left entry of $XAX ^{-1}$ is $\alpha$. If $c \notin \mathfrak{m}$ the same resigning for $A^t$ applies. If both $b$ and $c$ are in $\mathfrak{m}$, in view of  Hensel's lemma, it is enough to show that there is a rational point of the intersections of the reductions of the two equations modulo $\mathfrak{m}$ where the equations intersect transversely. Choose $\beta$ and $\gamma$ such that $a \beta -d \gamma=\alpha$ and $\beta - \gamma=1$, and choose $x,y,z,w$ such that $xw=\beta$ and $yz=\gamma$.
\end{proof}

In the following, if $A$ is an $n$-by-$n$ matrix, we write $A=\left( \begin{matrix} a&u^T\\v&B \end{matrix} \right)$, where $a$ is a scalar, $B$ is an $(n-1)$-by-$(n-1)$ matrix, and $u$ and $v$ are column vectors.

\begin{lemma} \label{lem:conjugate.big.n} Suppose that $n \geq 3$, that $A\in\GL_n(O)$ is not scalar modulo $\mathfrak{m}$, and that $a \in O^ *$. Then there is a conjugate of $A$ of the form $\left( \begin{matrix} a & w^T\\ z & C \end{matrix} \right)$, where the matrix $a C-zw^T$ is non-scalar modulo $\mathfrak{m}$.
\end{lemma}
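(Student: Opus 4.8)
The plan is to produce the conjugate in two stages: first conjugate $A$ over $O$ so that its top-left entry is exactly $a$, and then conjugate further by block-unipotent matrices that fix that corner so as to make $aC-zw^T$ non-scalar modulo $\mathfrak{m}$. The computational input is the following: if $A=\left(\begin{smallmatrix}a&w^T\\ z&C\end{smallmatrix}\right)$ with $a\in O^*$, then conjugating by $\left(\begin{smallmatrix}1&0\\ v&I\end{smallmatrix}\right)$ with $w^Tv=0$ keeps the top-left entry equal to $a$ and replaces $S:=aC-zw^T$ by $S+Cvw^T$, while conjugating by $\left(\begin{smallmatrix}1&u^T\\ 0&I\end{smallmatrix}\right)$ with $u^Tz=0$ keeps the top-left entry equal to $a$ and replaces $S$ by $S-zu^TC$. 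Whenever $\bar w\neq0$ (resp.\ $\bar z\neq0$) one can impose $w^Tv=0$ (resp.\ $u^Tz=0$) over $O$ with any prescribed reduction for $v$ (resp.\ $u$), by correcting one coordinate using a unit entry of $w$ (resp.\ $z$); this is what keeps the corner exactly equal to $a$ rather than only modulo $\mathfrak{m}$.

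\emph{First stage.} Since $\bar A:=A\bmod\mathfrak{m}$ is non-scalar, there is a nonzero $\bar v$ with $(\bar A-aI)\bar v\neq0$ and $(\bar A-aI)\bar v\notin K\bar v$ --- otherwise every vector is an eigenvector of $\bar A-aI$, forcing $\bar A$ scalar. Picking a basis of $K^n$ whose first vector is $\bar v$ and whose remaining vectors span a hyperplane containing $(\bar A-aI)\bar v$ but not $\bar v$, we obtain $\bar X\in\GL_n(K)$ with $(\bar X\bar A\bar X^{-1})_{1,1}=a$ and with nonzero lower-left block. The differential of $X\mapsto(XAX^{-1})_{1,1}$ at $\bar X$ is $Z\mapsto[Z,\bar X\bar A\bar X^{-1}]_{1,1}$, and the nonvanishing of the lower-left block makes this onto $K$; Hensel's lemma then lifts $\bar X$ to $X\in\GL_n(O)$ with $(XAX^{-1})_{1,1}=a$. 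Replacing $A$ by $XAX^{-1}$ we may assume $A=\left(\begin{smallmatrix}a&w^T\\ z&C\end{smallmatrix}\right)$ with $a\in O^*$ and $z\notin\mathfrak{m}^{n-1}$.

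\emph{Second stage.} Because $n\geq3$, the block $S$ is at least $2\times2$, hence is scalar only if it is a multiple of the identity; if $\bar S$ is already non-scalar we are done. Otherwise $\bar S=\lambda I_{n-1}$, and $\lambda=0$ would force $\bar C=a^{-1}\bar z\bar w^T$ and hence $\det\bar A=0$, so $\lambda\neq0$ and $\bar C=a^{-1}(\lambda I_{n-1}+\bar z\bar w^T)$. Since a rank-$\leq1$ perturbation of $\lambda I_{n-1}$ is scalar only if it vanishes, the first conjugation above finishes the proof once some $\bar v\in\bar w^\perp$ has $\bar C\bar v\neq0$, and the second once some $\bar u\in\bar z^\perp$ has $\bar u^T\bar C\neq0$; equivalently, once $\bar w^\perp\not\subseteq\ker\bar C$ with $\bar w\neq0$, resp.\ once $\operatorname{im}\bar C\not\subseteq K\bar z$. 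If $\bar w=0$ then $\bar C=a^{-1}\lambda I_{n-1}$ is invertible, so $\operatorname{im}\bar C=K^{n-1}\not\subseteq K\bar z$ and the second option applies. If $\bar w\neq0$ and $\bar w^\perp\subseteq\ker\bar C$, then $\operatorname{rank}\bar C\leq1$, which together with $\operatorname{rank}\bar C=\operatorname{rank}(\lambda I_{n-1}+\bar z\bar w^T)\geq n-2$ forces $n=3$ and $\operatorname{rank}\bar C=1$; then $\ker\bar C=\bar w^\perp$, and if in addition $\operatorname{im}\bar C\subseteq K\bar z$ we get $\bar C=c\,\bar z\bar w^T$ with $c\neq0$, whence $\bar S=(ac-1)\bar z\bar w^T=\lambda I_2$ has rank $\leq1$, contradicting $\lambda\neq0$. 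So at least one of the two conjugations does the job.

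The delicate point is this last degenerate configuration: for $n=3$ and a matrix whose deviation from being scalar is squeezed into a single rank-one direction, neither block-unipotent move of the stated shape need break the scalarity of $\bar S$ by itself, and one is forced to use the invertibility of $A$ over $O$ to exclude the simultaneous failure of both. The other point requiring care is carrying out the conjugations over $O$ while keeping the corner entry equal to $a$ on the nose, which is exactly why the first stage is arranged so as to yield $z\notin\mathfrak{m}^{n-1}$.
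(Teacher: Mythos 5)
Your proof is correct, and it rests on the same two-stage plan as the paper's: first conjugate to make the $(1,1)$-entry exactly $a$ while keeping the lower-left vector nonzero mod $\mathfrak m$, then perturb by block-unipotent conjugations that fix the corner. But both stages are heavier than they need to be, and the paper gets there more directly.

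In the first stage, the paper simply conjugates so that the lower-left vector $z$ is nonzero mod $\mathfrak{m}$ and then conjugates by an upper block-unipotent $\left(\begin{smallmatrix}1&x^T\\0&I\end{smallmatrix}\right)$, solving $\alpha+x^Tz=a$ directly over $O$ (possible since some entry of $z$ is a unit). No smoothness or lifting argument is needed; in particular the paper's argument for $n\ge3$ does not invoke Hensel's lemma, whereas your hyperplane-plus-Hensel construction does, so it quietly restricts to Henselian $O$.

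In the second stage, your case analysis is more intricate than necessary, and the ``delicate $n=3$ degenerate configuration'' you flag does not actually arise. The key algebraic identity, which the paper exploits, is that for $\bar u$ with $\bar u^T\bar z=0$ one has $\bar u^T\bar C=a^{-1}\bar u^T(\bar S+\bar z\bar w^T)=a^{-1}\bar u^T\bar S$. When $\bar S=\lambda I$ with $\lambda\ne0$ (and you correctly rule out $\lambda=0$ via $\det\bar A\ne0$), this equals $a^{-1}\lambda\bar u^T\neq0$ for every nonzero $\bar u$. Hence the upper-unipotent move with any nonzero $\bar u\in\bar z^\perp$ (such a $\bar u$ exists since $n-1\ge2$) already produces a nonzero rank-one perturbation $\bar z\bar u^T\bar C$ of the scalar $\bar S$. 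The lower-unipotent move and the $\bar w=0$, $\bar w^\perp\subseteq\ker\bar C$ subcases are therefore superfluous; in fact your hypothesis $\bar w^\perp\subseteq\ker\bar C$ is vacuous, since $\bar C\bar v=a^{-1}\lambda\bar v\ne0$ for $0\ne\bar v\in\bar w^\perp$ by the dual identity. So while your argument is logically sound, it routes a contradiction through an impossible case rather than noticing directly that one of the two moves always succeeds.
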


\begin{proof} Suppose that $A=\left( \begin{matrix} \alpha & u^T\\ v & B \end{matrix} \right)$. Since $A$ is non-scalar modulo $\mathfrak{m}$, we can conjugate it so that $v$ is non-zero modulo $\mathfrak{m}$. For every $x\in O^{n-1}$,
\[
\left( \begin{matrix} 1&x^T\\0&I \end{matrix} \right) \left( \begin{matrix} \alpha & u^T\\ v & B \end{matrix} \right) \left( \begin{matrix} 1&-x^T\\0&I \end{matrix} \right) = \left( \begin{matrix} \alpha+x^Tv & u^T+x^TB-\alpha x^T- (x^Tv)x^T \\ v&B-vx^T \end{matrix} \right),
\]
and therefore we can find $x$ such that $\alpha+x^Tv=a$. Conjugating by this matrix, we can assume that $A=\left( \begin{matrix} \alpha & u^T\\ v & B \end{matrix} \right)$, where $\alpha=a$ (and $u,v,B$ might have changed). Now it is enough to find $x$ such that \begin{enumerate}
\item $x^Tv=0$.
\item The matrix $\alpha B - vu^T-vx^TB$ is non-scalar modulo $\mathfrak{m}$.
\end{enumerate}
Denote the matrix $\alpha B - v u^T$ by $M$. If the first condition holds, then the expresion in the second condition is equal to $M - \alpha^{-1}  vx^T M$.

If $M$ is non-scalar modulo $\mathfrak{m}$, we can take $x=0$. Otherwise, we can take any $x$ such that $x^Tv=0$ since the sum of a scalar matrix and a non-zero rank-one matrix is never scalar.
\end{proof}

\begin{proof}[Proof of Theorem \ref{thm:commutator}] Suppose that $A\in\GL_n(O)$ is not central modulo $\mathfrak{m}$. Choose $a_1,\ldots,a_n\in O$ such that $\prod a_i=\det(A)$. We shall prove, by induction on $n$, that there is a lower-triangular unipotent matrix $X$, an upper-triangular unipotent matrix $Y$, and a matrix $g\in\SL_n(O)$ such that $XA^gY=D$, where $D$ is the diagonal matrix with diagonal $a_1,\ldots,a_n$. We start with $n=2$. By Lemma \ref{lem:conjugate.n=2}, there is $g\in\SL_2(O)$ such that $(A^g)_{1,1}=a_1$. There is a unipotent lower-triangular matrix $X$ such that $XA^g$ is upper-triangular. Similarly, there is a unipotent upper triangular matrix $Y$ such that $XA^gY$ is diagonal with diagonal $a_1,a_2=\det(A)/a_1$. For the induction step, Lemma \ref{lem:conjugate.big.n} implies that there is $g$ such that $A^g$ has the form $\left( \begin{matrix} a_1 & u^t \\ v &B \end{matrix} \right)$. Let $X=\left( \begin{matrix} 1 & 0 \\ -v/a_1 & \Id \end{matrix} \right)$ and $Y= \left( \begin{matrix} 1 & -u/a_1 \\ 0 & \Id \end{matrix} \right)$. Then $X A^g Y= \left( \begin{matrix} 1 & 0 \\ 0 & C \end{matrix} \right)$, where $C$ is non-scalar modulo $\mathfrak{m}$. Applying the induction hypothesis to $C$, we get the claim for $A$.

Now assume that $A\in\SL_n(O)$ is not a scalar modulo $\mathfrak{m}$, and choose the $a_i$ to satisfy that $\prod a_i=1$, that $a_i=a_{n+1-i}^{-1}$ for all $i=1,\ldots,n$ and that $a_i$ is not congruent to $a_j$ modulo $\mathfrak{m}$ if $i \neq j$ (this is where we use the assumption that the residue field has size greater than $n+1$). By what we have shown, there is a lower-triangular unipotent matrix $X$, an upper-triangular unipotent matrix $Y$, and a matrix $g\in\SL_n(O)$ such that $X^{-1}A^gY^{-1}=D^2$, where $D$ is the diagonal matrix with diagonal $a_1,\ldots,a_n$. Since $A^g=(XD)(DY)$ and since $XD$ and $(DY)^{-1}$ are conjugate (both are cyclic and have the same eigenvalues), it follows that $A^g$ is a commutator. Hence $A$ is also a commutator.
\end{proof}

\begin{thm} \label{thm:commutatorPSL}
If $O$ is a local ring whose residue field $K$ elements, where $n$
is a proper divisor of $|K| -1$, then every element in $\PSL_n(O)$ 
is a commutator.
\end{thm}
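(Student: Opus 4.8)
The plan is to reduce the statement to two results already established in this section: Theorem~\ref{thm:commutator}, which handles elements that are not scalar modulo $\mathfrak{m}$, and Corollary~\ref{cor:primitive.scalar}, which handles any lift of a primitive $n$-th root of unity times the identity. Here $\PSL_n(O)$ is understood as $\SL_n(O)/Z(\SL_n(O))$, so an element of $\PSL_n(O)$ determines a lift to $\SL_n(O)$ only up to multiplication by a central scalar matrix $\zeta I$ with $\zeta^n=1$; the argument exploits this freedom in choosing $\zeta$.

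First I would fix $\bar A\in\PSL_n(O)$, pick any lift $A\in\SL_n(O)$, and let $A_0\in\SL_n(K)$ be the reduction of $A$ modulo $\mathfrak{m}$. If $A_0$ is not scalar, then, because $n$ is a \emph{proper} divisor of $|K|-1$, we have $|K|>n+1$, so Theorem~\ref{thm:commutator} applies to $A$ and shows that $A$, and hence $\bar A$, is a commutator. This is exactly where the properness of the divisor is used: it is precisely the inequality $|K|>n+1$ required by Theorem~\ref{thm:commutator}.

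It remains to treat the case $A_0=\mu I$ for some $\mu\in K$. Then $\mu^n=\det(A_0)=1$. Since $n\mid|K|-1$, the multiplicative group $K^*$ is cyclic of order divisible by $n$, so its subgroup of $n$-th roots of unity is cyclic of order exactly $n$; in particular it contains a primitive $n$-th root of unity $\lambda$, and $\nu:=\lambda\mu^{-1}$ is again an $n$-th root of unity in $K$. Because $n\mid|K|-1$ forces $n$ to be prime to the characteristic of $K$, the polynomial $X^n-1$ is separable over $K$, so Hensel's lemma (used exactly as in the proofs above) lifts $\nu$ to an element $\zeta\in O$ with $\zeta^n=1$; then $\zeta I\in Z(\SL_n(O))$. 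Now $\zeta A\in\SL_n(O)$ is another lift of $\bar A$, and it reduces modulo $\mathfrak{m}$ to $(\nu\mu)I=\lambda I$ with $\lambda$ primitive, so by Corollary~\ref{cor:primitive.scalar} the element $\zeta A$ is a commutator in $\SL_n(O)$; passing to the quotient, $\bar A$ is a commutator in $\PSL_n(O)$, which completes the proof.

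I do not expect a genuine obstacle, since the real content is carried by Theorem~\ref{thm:commutator} and Corollary~\ref{cor:primitive.scalar}. The only step needing care is the scalar case, where one truly must pass to $\PSL_n$: by Proposition~\ref{lem:nosolution} and the corollary following it, an unsuitable scalar lift --- the identity being the extreme example --- need not be a commutator in $\SL_n(O)$ itself, and it is exactly the quotient by the center that allows one to replace an arbitrary scalar $\mu I$ by a primitive-root-of-unity scalar $\lambda I$ to which Corollary~\ref{cor:primitive.scalar} applies.
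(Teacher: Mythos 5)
Your proposal is correct and takes essentially the same route as the paper's proof, which likewise splits into the non-scalar case (Theorem~\ref{thm:commutator}, using $|K|>n+1$ from the properness of the divisor) and the scalar case (choosing a lift reducing to $\lambda I$ and applying Corollary~\ref{cor:primitive.scalar}). You spell out a step the paper leaves implicit: the Hensel-lemma lifting of the $n$-th root of unity $\lambda\mu^{-1}\in K^*$ to $O^*$, which is what guarantees the existence of a central element $\zeta I$ allowing one to adjust an arbitrary scalar lift to one reducing to $\lambda I$.
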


\begin{proof} By assumption, there is a primitive $n$-th root of 1 in $K$; denote it by $\lambda$. Let $g\in\PSL_n(O)$. If $\bar g$ is not the identity, let $h\in\SL_n(O)$ be any lift of $g$. If $\bar g=1$, let $h\in\SL_n(O)$ be a lift of $g$ such that $\bar h=\lambda I$. By either Proposition \ref{thm:commutator} or Corollary \ref{cor:primitive.scalar}, the element $h$ is a commutator, and, hence, so is $g$.
\end{proof}

\end{document}